\newtheorem{proposition}{Proposition}
\newtheorem{definition}{Definition}
\newtheorem{theorem}{Theorem}
\newtheorem{lemma}{Lemma}
\newtheorem{assumption}{Assumption}
\newtheorem{fact}{Fact}
\newtheorem{remark}{Remark}
\newcommand{\signal}[1]{{\boldsymbol{#1}}}
\newcommand{\norm}[1]{\left\|#1\right\|}
\newcommand{\real}{{\mathbb R}}
\newcommand{\Real}{{\mathbb R}}
\newcommand{\integer}{{\mathbb Z}}
\newcommand{\refeq}[1]{(\ref{#1})}
\newcommand{\reftab}[1]{Table \ref{#1}}
\newcommand{\reffig}[1]{Figure \ref{#1}}
\newcommand{\argmin}{\operatornamewithlimits{argmin}}
\newcommand{\X}{{\mathcal{X}}}
\renewcommand{\H}{{\mathcal{H}}}
\newcommand{\U}{\mathcal{U}}
\newcommand{\T}{{\sf T}}
\newcommand{\x}{\times}
\newcommand{\ka}[1]{\kappa\left(#1\right)}
\newcommand{\inpro}[1]{\left<#1\right>}
\definecolor{darkgreen}{rgb}{0,.6,0}
\definecolor{medorange}{rgb}{0.7,0.3,0}
\definecolor{cyancyan}{rgb}{0.68, 0.92, 0.92}
\def\nn{\nonumber}
\title{Continuous-time Value Function Approximation \\in Reproducing Kernel Hilbert Spaces}
\author{
	Motoya~Ohnishi\\
	Keio Univ., KTH, RIKEN\\
	\texttt{motoya.ohnishi@riken.jp} \\
	\And
	Masahiro~Yukawa\\
	Keio Univ., RIKEN\\
	\texttt{yukawa@elec.keio.ac.jp} \\
	\AND
	Mikael~Johansson\\
	KTH\\
	\texttt{mikaelj@ee.kth.se} \\
	\And
	Masashi~Sugiyama\\
	RIKEN, Univ. Tokyo\\
	\texttt{masashi.sugiyama@riken.jp} \\
}
\begin{document}
	
	\maketitle
	
	\begin{abstract}
		Motivated by the success of reinforcement learning (RL) 
		for discrete-time tasks such as AlphaGo and Atari games, 
		there has been a recent surge of interest in using RL for continuous-time control of physical systems 
		(cf. many challenging tasks in OpenAI Gym and DeepMind Control Suite).
		Since discretization of time is susceptible to error, it is methodologically more
		desirable to handle the system dynamics directly in continuous time.
		However, very few techniques exist for continuous-time RL and they lack
		flexibility in value function approximation.
		In this paper, we propose a novel framework for model-based continuous-time value
		function approximation in reproducing kernel Hilbert spaces.
		The resulting framework is so flexible that it can accommodate any
		kind of kernel-based approach, such as Gaussian processes and kernel adaptive filters, and it allows us to handle
		uncertainties and nonstationarity without prior knowledge about
		the environment or what basis functions to employ.
		We demonstrate the validity of the presented framework through experiments.
	\end{abstract}
	
	\section{Introduction}
	\label{sec:intro}
	Reinforcement learning (RL) \cite{reinforcement,lewis2009reinforcement,sugiyama2015statistical} has been successful in a variety of applications such as AlphaGo and Atari games,
	particularly for discrete stochastic systems.
	Recently, application of RL to physical control tasks has also been gaining attention, because
	solving an optimal control problem (or the Hamilton-Jacobi-Bellman-Isaacs equation) \cite{optimalcontrol} directly is computationally prohibitive for complex nonlinear system dynamics and/or cost functions.
	
	In the physical world, states and actions are continuous, and many dynamical systems evolve in continuous time.
	OpenAI Gym \cite{openaigym} and DeepMind Control Suite \cite{controlsuite}
	offer several representative examples of such physical tasks.
	When handling continuous-time (CT) systems,
	CT formulations are methodologically desirable over the use of
	discrete-time (DT) formulations with the small time intervals, since such discretization is susceptible to errors.
	In terms of computational complexities and the ease of analysis, CT formulations are also more advantageous over DT counterparts for control-theoretic analyses such as {\it stability} and {\it forward invariance} \cite{khalil1996noninear}, which are useful for safety-critical applications.
	As we will show in this paper, our framework allows to constrain control inputs and/or states in a computationally efficient way. 
	
	One of the early examples of RL for CT systems \cite{baird1994reinforcement} pointed out that Q learning is incabable of learning in continuous time and proposed advantage updating.
	Convergence proofs were given in \cite{munos1998reinforcement} for systems described by stochastic differential equations (SDEs) \cite{oksendal2003stochastic} using a grid-based discretization of states and time.
	Stochastic differential dynamic programming and RL have also been studied in, for example, \cite{theodorou2010stochastic,pan2014probabilistic,theodorou2010reinforcement}.
	For continuous states and actions, function approximators are often employed instead of finely discretizing the state space to avoid the explosion of computational complexities.
	The work in \cite{doya2000reinforcement} presented an application of CT-RL by function approximators such as Gaussian networks with fixed number of basis functions.
	In \cite{vamvoudakis2010online}, it was mentioned that any continuously differentiable value function (VF) can be approximated by increasing the number of independent basis functions to infinity in CT scenarios,
	and a CT policy iteration was proposed.
	
	However, without resorting to the theory of reproducing kernels \cite{aronszajn50}, determining the number of basis functions and selecting the suitable basis function class cannot be performed systematically in general.
	Nonparametric learning is often desirable when no {\it a priori} knowledge about a suitable set of basis functions for learning is available.
	Kernel-based methods have many non-parametric learning algorithms, ranging from Gaussian processes (GPs) \cite{rasmussen2006gaussian} to kernel adaptive filters (KFs) \cite{liu_book10}, which can provably deal with
	uncertainties and nonstationarity.
	While DT kernel-based RL was studied in \cite{ormoneit2002kernel,xu2007kernel,taylor2009kernelized,sun2016online,nishiyama2012hilbert,grunewalder2012modelling,ohnishi2018safety}, for example, and the Gaussian process temporal difference (GPTD) algorithm was presented in \cite{engel2005reinforcement},
	no CT kernel-based RL has been proposed to our knowledge.
	Moreover, there is no unified framework in which existing kernel methods and their convergence/tracking analyses are straightforwardly applied to model-based VF approximation.
	
	In this paper, we present a novel theoretical framework of model-based CT-VF approximation
	in reproducing kernel Hilbert spaces (RKHSs) \cite{aronszajn50} for systems described by SDEs.
	The RKHS for learning is defined through one-to-one correspondence to a user-defined RKHS in which the VF being obtained is lying.
	We then obtain the associated kernel to be used for learning.
	The resulting framework renders any kind of kernel-based methods applicable in model-based CT-VF approximation, including GPs \cite{rasmussen2006gaussian} and KFs \cite{liu_book10}. 
	In addition, we propose an efficient barrier-certified policy update for CT systems, which implicitly enforces state constraints.
	Relations of our framework to the existing approaches for DT, DT stochastic (the Markov decision process (MDP)), CT, and CT stochastic systems are shown in \reftab{relationtab}.
	Our proposed framework covers model-based VF approximation working in RKHSs, including those for CT and CT stochastic systems.
	We verify the validity of the framework on the classical Mountain Car problem and a simulated inverted pendulum.
	
	\begin{table}[t]
		\caption{Relations to the existing approaches}
		\label{relationtab}
		\centering
		\begin{tabular}{lllll}
			\toprule
			& DT & DT stochastic (MDP) & CT & CT stochastic   \\
			\midrule
			Non kernel-based & (e.g. \cite{baird1995residual}) & (e.g. \cite{tsitsiklis1997analysis}) & (e.g. \cite{doya2000reinforcement}) & (e.g. \cite{munos1998reinforcement}) \\
			\textbf{Kernel-based} & \textbf{(e.g. \cite{engel2005reinforcement})} & \textbf{(e.g. \cite{grunewalder2012modelling})} & \textbf{(This work)} & \textbf{(This work)}  \\
			\bottomrule
		\end{tabular}
	\end{table}
	
	\section{Problem setting}
	Throughout, $\Real$, $\integer_{\geq0}$, and $\integer_{>0}$ are
	the sets of real numbers, nonnegative integers, and strictly positive integers,
	respectively.
	We suppose that the system dynamics described by the SDE \cite{oksendal2003stochastic},
	\begin{align}
	dx=h(x(t),u(t))dt+\eta(x(t),u(t))dw, \label{system}
	\end{align}
	is known or learned, where $x(t)\in\Real^{n_x}$, $u(t)\in\U\subset\Real^{n_u}$, and $w$ are the state, control, and a Brownian motion of dimensions $n_x\in\integer_{>0}$, $n_u\in\integer_{>0}$, and $n_w\in\integer_{>0}$, respectively,
	$h:\Real^{n_x}\x\U\rightarrow\Real^{n_x}$ is the drift, and $\eta:\Real^{n_x}\x\U\rightarrow\Real^{n_x\x n_w}$ is the diffusion.
	A Brownian motion can be considered as a process noise, and is known to satisfy the Markov property \cite{oksendal2003stochastic}.
	Given a policy $\phi:\Real^{n_x}\rightarrow\U$, we define $h^{\phi}(x):=h(x,\phi(x))$ and $\eta^{\phi}(x):=\eta(x,\phi(x))$,
	and make the following two assumptions.
	\begin{assumption}
		\label{assump1}
		For any Lipschitz continuous policy $\phi$,
		both $h^{\phi}(x)$ and $\eta^{\phi}(x)$ are Lipschitz continuous, i.e., the stochastic process defined in \refeq{system}
		is an It\^o diffusion {\cite[Definition~7.1.1]{oksendal2003stochastic}}, which has a pathwise unique solution for $t\in[0,\infty)$.
	\end{assumption}
	\begin{assumption}		
		\label{assump2}
		The set $\X\subset\Real^{n_x}$ is compact with nonempty interior ${\rm int}(\X)$, and 
		${\rm int}(\X)$ is invariant under the system \refeq{system} with any Lipschitz continuous policy $\phi$, i.e.,
		\begin{align}
		P_{x}(x(t)\in{\rm int}(\X))=1,\;\forall x\in{\rm int}(\X),\;\forall t\geq 0,
		\end{align}
		where $P_{x}(x(t)\in{\rm int}(\X))$ denotes the probability that $x(t)$ lies in ${\rm int}(\X)$ when starting from $x(0)=x$.
	\end{assumption}
	Assumption \ref{assump2} implies that a solution of the system \refeq{system} stays in ${\rm int}(\X)$ with probability one.
	We refer the readers to \cite{khasminskii2011stochastic} for stochastic stability and invariance for SDEs.
	
	In this paper, we consider the immediate cost function\footnote{The cost function might be obtained by the negation of the reward function.} $R:\Real^{n_x}\x\U\rightarrow\Real$, which is continuous and satisfies
	$
	E_x\left[\int_{0}^{\infty}e^{-\beta t}|R(x(t),u(t))|dt\right]<\infty, \label{expe}
	$
	where $E_x$ is the expectation for all trajectories (time evolutions of $x(t)$) starting from $x(0)=x$, and $\beta\geq0$ is the discount factor.  Note this boundedness implies that $\beta>0$ or that there exists a zero-cost state which is stochastically asymptotically stable \cite{khasminskii2011stochastic}.
	Specifically, we consider the case where the immediate cost is {\em not} known {\em a priori} but is sequentially observed.
	Now, the VF associated with a policy $\phi$ is given by
	\begin{align}
	V^{\phi}(x):=E_x\left[\int_{0}^{\infty}e^{-\beta t}R^{\phi}(x(t))dt\right]<\infty,
	\end{align}
	where $R^{\phi}(x(t)):=R(x(t),\phi(x(t)))$.
	
	The advantages of using CT formulations include a smooth control performance and an efficient policy update, and CT formulations require no elaborative partitioning of time  \cite{doya2000reinforcement}.  In addition, our work shows that CT formulations make control-theoretic analyses easier and computationally more efficient
	and are more advantageous in terms of susceptibility to errors when the time interval is small.
	We mention that the CT formulation can still be considered in spite of the fact that the algorithm is implemented in discrete time.
	
	With these problem settings in place, our goal is to estimate the CT-VF in an RKHS and improve policies.
	However, since the output $V^{\phi}(x)$ is unobservable and the so-called double-sampling problem exists when approximating VFs (see e.g., \cite{sutton2009convergent, konda2004convergence}),
	kernel-based supervised learning and its analysis cannot be directly applied to VF approximation in general.
	Motivated by this fact, we propose a novel model-based CT-VF approximation framework which
	enables us to conduct kernel-based VF approximation as supervised learning.
	\section{Model-based CT-VF approximation in RKHSs}
	In this section, we briefly present an overview of our framework;
	We take the following steps:
	\begin{enumerate}
		\item Select an RKHS $\H_V$ which is supposed to contain $V^{\phi}$ as one of its elements.
		\item Construct another RKHS $\H_{R}$ under one-to-one correspondence to $\H_V$ through a certain bijective linear operator $U:\H_V\rightarrow\H_{R}$ to be defined later in the next section.
		\item Estimate the immediate cost function $R^{\phi}$ in the RKHS $\H_{R}$ by kernel-based supervised learning, and return its estimate $\hat{R^{\phi}}$.
		\item An estimate of the VF $V^{\phi}$ is immediately obtained by $U^{-1}(\hat{R^{\phi}})$.
	\end{enumerate}
	An illustration of our framework is depicted in \reffig{fig:illusttheo}.
	\begin{figure}[t]
		\begin{center}
			\includegraphics[clip,width=0.6\textwidth]{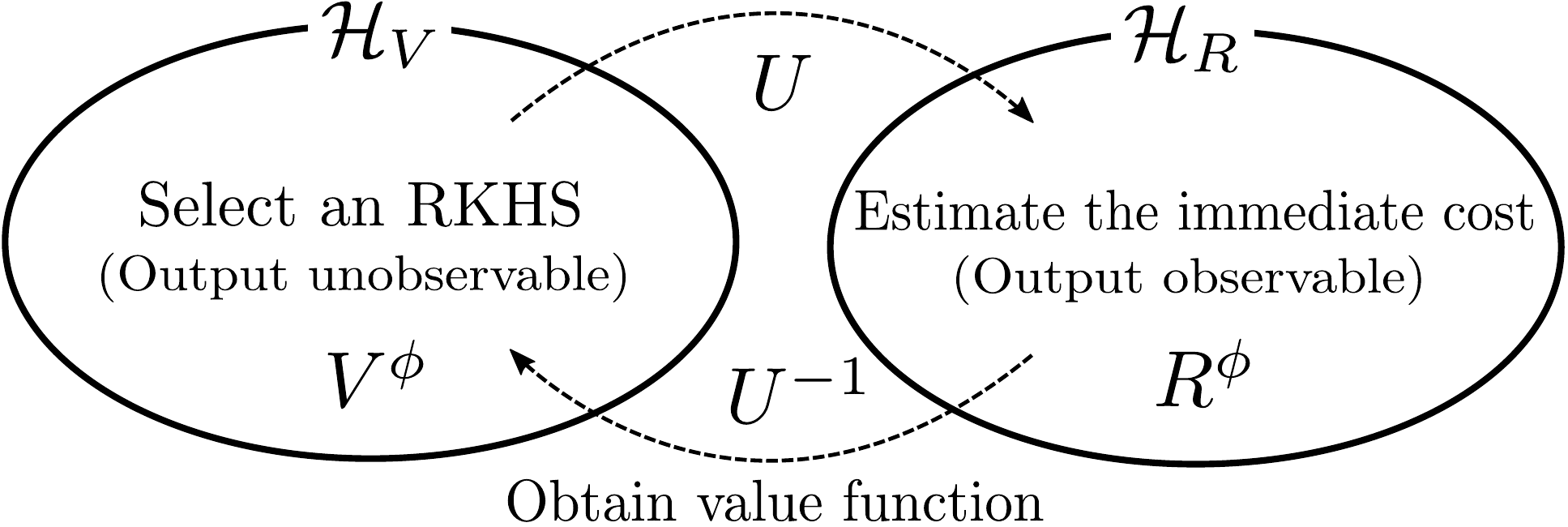}
			\caption{An illustration of the main ideas of our proposed framework.  Given a system dynamics and an RKHS $\H_V$ for the VF $V^{\phi}$, define $\H_{R}$ under
				one-to-one correspondence to estimate an observable immediate cost function in $\H_{R}$, and obtain $V^{\phi}$ by bringing it back to $\H_V$.}
			\label{fig:illusttheo}
		\end{center}
	\end{figure}
	Note we can avoid the double-sampling problem because the operator $U$ is deterministic even though the system dynamics is stochastic.
	Therefore, under this framework, model-based CT-VF approximation in RKHSs can be derived, and convergence/tracking analyses of kernel-based supervised learning can also be applied to VF approximation.
	\vspace{-1em}
	\paragraph {Policy update while restricting certain regions of the state space}
	\begin{algorithm}[t]
		\caption{Model-based CT-VF Approximation in RKHSs with Barrier-Certified Policy Updates}
		\label{algo}
		\begin{algorithmic}
			\STATE {\bfseries Estimate of the VF:}
			$\hat{V}^{\phi}_n=U^{-1}(\hat{R}^{\phi}_n)$
			\FOR{$n\in\integer_{\geq0}$}
			\STATE - Receive $x_n\in\X$, $\phi(x_n)\in\U$, and $R(x_n,\phi(x_n))\in\Real$
			\STATE - Update the estimate $\hat{R}^{\phi}_n$ of $R^{\phi}$ by using some kernel-based method in $\H_{R}$\hfill $\triangleright$ e.g., Section \ref{sec:applications}\\
			\STATE - Update the policy with barrier certificates when $V^{\phi}$ is well estimated\hfill $\triangleright$ e.g., \refeq{poliplus}
			\ENDFOR
		\end{algorithmic}
	\end{algorithm}
	As mentioned above, one of the advantages of a CT framework is its affinity for control-theoretic analyses such as
	\emph{stability} and \emph{forward invariance}, which are useful for safety-critical applications.
	For example, suppose that we need to restrict the region of exploration in the state space
	to some set $\mathcal{C}:=\{x\in\X\mid b(x)\geq0\}$, where $b:\X\rightarrow\Real$ is smooth.
	This is often required for safety-critical applications.
	\begin{wrapfigure}{L}{0.35\textwidth}
		\begin{center}
			\includegraphics[clip,width=0.35\textwidth]{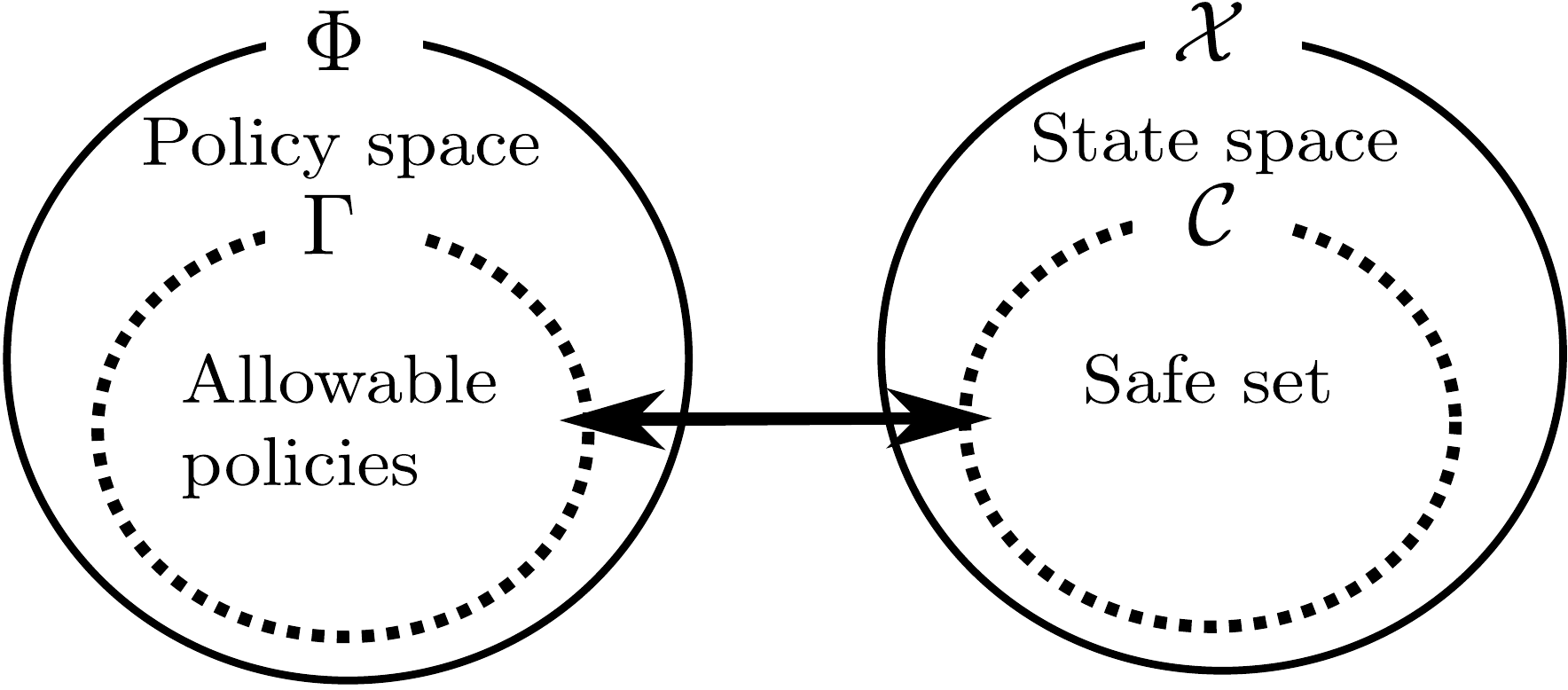}
			\caption{An illustration of barrier-certified policy updates.  State constraints are implicitly enforced via barrier certificates.}
			\label{fig:illustbarrier}
		\end{center}
	\end{wrapfigure}

	To this end, control inputs must be properly constrained so that the state trajectory remains inside the set $\mathcal{C}$.
	In the safe RL context, there exists an idea of considering a smaller space of allowable policies (see \cite{Safesurvey} and references therein).
	To effectively constrain policies, we employ control barrier certificates (cf. \cite{xu2015robustness,wieland2007constructive,glotfelter2017nonsmooth,wang2017safety,ames2016control,2017dcbf}).
	Without explicitly calculating the state trajectory over a long time horizon,
	it is known that any Lipschitz continuous policy satisfying control barrier certificates renders the set $\mathcal{C}$ forward invariant \cite{xu2015robustness}, i.e., the state trajectory remains inside the set $\mathcal{C}$.  In other words, we can implicitly enforce state constraints by satisfying barrier certificates when updating policies.
	Barrier-certified policy update was first introduced in \cite{ohnishi2018safety} for DT systems, but is computationally more efficient in
	our CT scenario.
	This concept is illustrated in \reffig{fig:illustbarrier}, where $\Phi$ is the space of Lipschitz continuous policies $\phi:\X\rightarrow\mathcal{U}$, and $\Gamma$ is the space of barrier-certified allowable policies.
	
	A brief summary of the proposed model-based CT-VF approximation in RKHSs is given in Algorithm \ref{algo}.
	In the next section, we present  theoretical analyses of our framework.

	\section{Theoretical analyses}
	We presented the motivations and an overview of our framework in the previous section.
	In this section, we validate the proposed framework from theoretical viewpoints.
	Because the output $V^{\phi}(x)$ of the VF is unobservable, we follow the strategy presented in the previous section.
	First, by properly identifying the RKHS $\H_V$ which is supposed to contain the VF, we can implicitly restrict the class of the VF.
	If the VF $V^{\phi}$ is twice continuously differentiable\footnote{
		See, for example, {\cite[Chapter~IV]{fleming2006controlled}},\cite{krylov2008controlled}, for more detailed arguments about the conditions under which twice continuous differentiability is guaranteed.} over ${\rm int}(\X)\subset\X$,
	we obtain the following Hamilton-Jacobi-Bellman-Isaacs equation \cite{oksendal2003stochastic}:
	\begin{align}
	\beta V^{\phi}(x)=-\mathcal{G}(V^{\phi})(x)+R^{\phi}(x), \;x\in{\rm int}(\X),\label{bellman}
	\end{align}
	where the infinitesimal generator $\mathcal{G}$ is defined as
	\begin{align}
	\mathcal{G}(V^{\phi})(x):=-\frac{1}{2}{\rm tr}\left[\frac{\partial^2 V^{\phi}(x)}{\partial x^2}A^{\phi}(x)\right]-
	\frac{\partial V^{\phi}(x)}{\partial x}h^{\phi}(x), \; x\in{\rm int}(\X). \label{inf}
	\end{align}
	Here, ${\rm tr}$ stands for the trace, and $A^{\phi}(x):=A(x,\phi(x))\in\Real^{n_x\x n_x}$, where $A(x,u)=\eta(x,u)\eta(x,u)^{\T}$.
	By employing a suitable RKHS such as a Gaussian RKHS for $\H_V$, we can guarantee twice continuous differentiability of an estimated VF.
	Note that functions in a Gaussian RKHS are smooth \cite{minh2010some}, and any continuous function on every compact subset of $\Real^{n_x}$ can be
	approximated with an arbitrary accuracy \cite{steinwart01} in a Gaussian RKHS.
	
	Next, we need to construct another RKHS $\H_R$ which contains the immediate cost function $R^{\phi}$ as one of its element.
	The relation between the VF and the immediate cost function is given by rewriting \refeq{bellman} as
	\begin{align}
	R^{\phi}(x)=\left[\beta I_{\rm op}+\mathcal{G}\right](V^{\phi})(x), \;x\in{\rm int}(\X),
	\end{align}
	where $I_{\rm op}$ is the identity operator.
	To define the operator $\beta I_{\rm op}+\mathcal{G}$ over the whole $\X$, we use the following definition.
	\begin{definition}[{\cite[Definition~1]{zhou2008derivative}}]
		Let $I_s:=\{\alpha:=[\alpha^1,\alpha^2,\ldots,\alpha^{n_x}]^{\T}\in\integer_{\geq0}^{n_x}\mid\sum_{j=1}^{n_x}\alpha^j\leq s\}$ for $s\in\integer_{\geq0},\;n_x\in\integer_{>0}$.
		Define
		$
		D^{\alpha}\varphi(x)=\frac{\partial^{\sum_{j=1}^{n_x}\alpha^j}}{\partial(x^1)^{\alpha^1}\partial(x^2)^{\alpha^2}\ldots\partial(x^{n_x})^{\alpha^{n_x}}}\varphi(x), 
		$
		where $x:=[x^1,x^2,\ldots,x^{n_x}]^{\T}\in\Real^{n_x}$.
		If $\X\subset\Real^{n_x}$ is compact with nonempty interior ${\rm int}(\X)$,
		$C^s({\rm int}(\X))$ is the space of functions $\varphi$ over ${\rm int}(\X)$ such that $D^{\alpha}\varphi$ is well defined and continuous over ${\rm int}(\X)$
		for each $\alpha\in I_s$.
		Define $C^s(\X)$ to be the space of continuous functions $\varphi$ over $\X$ such that $\varphi|_{{\rm int}(\X)}\in C^s({\rm int}(\X))$ and that $D^{\alpha}(\varphi|_{{\rm int}(\X)})$ has a continuous extension $D^{\alpha}\varphi$ to $\X$ for each $\alpha\in I_s$.
		If $\kappa\in C^{2s}(\X\x\X)$, define
		$
		(D^{\alpha}\kappa)_x(y)=\frac{\partial^{\sum_{j=1}^{n_x}\alpha^j}}{\partial(x^1)^{\alpha^1}\partial(x^2)^{\alpha^2}\ldots\partial(x^{n_x})^{\alpha^{n_x}}}
		\kappa(y,x),\;\forall x,y\in{\rm int}(\X).
		$
	\end{definition}
	Now, suppose that $\H_V$ is an RKHS associated with the reproducing kernel $\kappa^V(\cdot,\cdot)\in C^{2\x2}(\X\x\X)$.
	Then, we define the operator $U:\H_V\rightarrow\H_R:=\{\varphi\mid\varphi(x)=U(\varphi^V)(x),\;\exists\varphi^V\in\H_{V},\;\forall x\in\X\}$ as
	\begin{align}
	U(\varphi^V)(x)&:=\beta\varphi^V(x) -[D^{e_1}\varphi^V(x),D^{e_2}\varphi^V(x),\ldots,D^{e_{n_x}}\varphi^V(x)]h^{\phi}(x)\nn\\
	&-\frac{1}{2}\sum_{m,n=1}^{n_x}A^{\phi}_{m,n}(x)D^{e_m+e_n}\varphi^V(x),\;\;\;\;\; \forall \varphi^V\in\H_V,\;\forall x\in\X, \label{opeU}
	\end{align}
	where $A^{\phi}_{m,n}(x)$ is the $(m,n)$ entry of $A^{\phi}(x)$.  Note here that $U(\varphi^V)(x)=\left[\beta I_{\rm op}+\mathcal{G}\right](\varphi^V)(x)$ \emph{over ${\rm int}(\X)$}.
	We emphasize here that the {\em expected} value and the immediate cost are related through the {\em deterministic} operator $U$.
	The following main theorem states that $\H_R$ is indeed an RKHS under Assumptions \ref{assump1} and \ref{assump2}, and its corresponding reproducing kernel is obtained.
	%
	\begin{theorem}
		\label{theoRKHS}
		Under Assumptions \ref{assump1} and \ref{assump2}, suppose that $\H_V$ is an RKHS associated with the reproducing kernel $\kappa^V(\cdot,\cdot)\in C^{2\x2}(\X\x\X)$.
		Suppose also that (i) $\beta>0$, or that (ii) $\H_V$ is a Gaussian RKHS, and there exists a point $x_{t\rightarrow\infty}\in{\rm int}(\X)$ which is stochastically asymptotically stable over ${\rm int}(\X)$, i.e., $P_x\left(\displaystyle\lim_{t\rightarrow\infty}x(t)=x_{t\rightarrow\infty}\right)=1$ for any starting point $x\in{\rm int}(\X)$.
		Then, the following statements hold.\\
		(a) The space $\H_{R}:=\{\varphi\mid\varphi(x)=U(\varphi^V)(x),\;\exists\varphi^V\in\H_{V},\;\forall x\in\X\}$ is an isomorphic Hilbert space of $\H_V$ equipped with the inner product defined by
		\begin{align}
		&\inpro{\varphi_1,\varphi_2}_{\H_{R}}:=\inpro{\varphi_1^V,\varphi_2^V}_{\H_{V}},\;\varphi_i(x):=U(\varphi_i^V)(x),\;\forall x\in\X,\;i\in\{1,2\}, \label{inpro}
		\end{align}
		where the operator $U$ is defined in \refeq{opeU}.\\
		(b) The Hilbert space $\H_{R}$ has the reproducing kernel given by
		\begin{align}
		&\kappa(x,y):=U(K(\cdot,y))(x), \;x,y\in\X,  \label{kernelCT}
		\end{align}
		where
		\begin{align}
		K(x,y)&=\beta\kappa^V(x,y)-[(D^{e_1}\kappa^V)_y(x),(D^{e_2}\kappa^V)_y(x),\ldots,(D^{e_{n_x}}\kappa^V)_y(x)]h^{\phi}(y)\nn\\
		&-\frac{1}{2}\sum_{m,n=1}^{n_x}A^{\phi}_{m,n}(y)(D^{e_m+e_n}\kappa^V)_y(x). \label{KK}
		\end{align}
	\end{theorem}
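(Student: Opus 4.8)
The plan is to establish part (a) and then part (b), the whole argument hinging on showing that the operator $U$ of \refeq{opeU} is injective — which is precisely what the two alternative hypotheses (i) and (ii) are there to secure. First I would record the two consequences of the smoothness assumption: since $\kappa^V\in C^{2\x2}(\X\x\X)$, the derivative reproducing property \cite{zhou2008derivative} gives $\H_V\subseteq C^2(\X)$, the membership $(D^\alpha\kappa^V)_x\in\H_V$ for every $x\in{\rm int}(\X)$ and $|\alpha|\le 2$, and the identities $D^\alpha\varphi^V(x)=\inpro{\varphi^V,(D^\alpha\kappa^V)_x}_{\H_V}$ (with $\alpha=0$ being the usual reproducing property). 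Together with Assumption \ref{assump1}, which makes $h^\phi$ and $A^\phi=\eta^\phi(\eta^\phi)^{\T}$ Lipschitz, hence bounded, on the compact set $\X$, this shows that $U$ is a well-defined linear map from $\H_V$ into $C(\X)$, and that on ${\rm int}(\X)$ it equals $\beta I_{\rm op}+\mathcal{G}=\beta I_{\rm op}-\mathcal{A}^{\phi}$, where $\mathcal{A}^{\phi}:=-\mathcal{G}$ is the infinitesimal generator of \refeq{system} under $\phi$.

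The main work — and the step I expect to be the obstacle — is the injectivity of $U$. Suppose $U(\varphi^V)=0$ on $\X$; then $\mathcal{A}^{\phi}\varphi^V=\beta\varphi^V$ on ${\rm int}(\X)$. I would apply Dynkin's formula (It\^o's formula in expectation) to $t\mapsto e^{-\beta t}\varphi^V(x(t))$ along the solution of \refeq{system} started at $x(0)=x\in{\rm int}(\X)$; this is legitimate because $\varphi^V\in C^2(\X)$, the trajectory remains in ${\rm int}(\X)$ with probability one by Assumption \ref{assump2}, and $\nabla\varphi^V$ and $\eta^\phi$ are bounded on $\X$ so the stochastic-integral term is a true martingale. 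The drift term vanishes identically, leaving $\varphi^V(x)=E_x\!\left[e^{-\beta T}\varphi^V(x(T))\right]$ for all $T\ge 0$ and $x\in{\rm int}(\X)$. Letting $T\to\infty$: under (i) the right-hand side is bounded by $e^{-\beta T}\sup_{\X}|\varphi^V|\to 0$, so $\varphi^V\equiv 0$ on ${\rm int}(\X)$; under (ii), $x(T)\to x_{t\rightarrow\infty}$ almost surely and $\varphi^V$ is continuous and bounded, so bounded convergence forces $\varphi^V$ to be constant on ${\rm int}(\X)$ (equal to $\varphi^V(x_{t\rightarrow\infty})$, or $0$ if $\beta>0$), and since a Gaussian RKHS contains no nonzero constant — its elements are real-analytic, so constancy on the open set ${\rm int}(\X)$ propagates to all of $\Real^{n_x}$ — we again get $\varphi^V\equiv 0$. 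In either case $\varphi^V=0$ as an element of $\H_V$ (using that $\X$ is the closure of its interior in the situations of interest, or analyticity under (ii)). I anticipate the delicate points to be the true-martingale justification, the passage to the limit, and the exclusion of nonzero constants in the Gaussian case.

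Granting injectivity, part (a) is short: $U$ is onto $\H_R$ by the very definition of $\H_R$ and one-to-one by the previous step, so the right-hand side of \refeq{inpro} does not depend on the representative and \refeq{inpro} is a well-defined inner product — bilinear, symmetric, and positive-definite because $\inpro{\cdot,\cdot}_{\H_V}$ is and $U$ is injective. By construction $U$ is then an isometry of $\H_V$ onto $\H_R$, and completeness transfers: a Cauchy sequence in $\H_R$ pulls back under $U^{-1}$ to a Cauchy, hence convergent, sequence in $\H_V$ whose $U$-image is its limit in $\H_R$. Thus $\H_R$ is a Hilbert space isometrically isomorphic to $\H_V$.

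For part (b), fix $y\in\X$ and put $g_y:=\beta\kappa^V(\cdot,y)-\sum_{i=1}^{n_x}h^\phi_i(y)\,(D^{e_i}\kappa^V)_y-\frac{1}{2}\sum_{m,n=1}^{n_x}A^{\phi}_{m,n}(y)\,(D^{e_m+e_n}\kappa^V)_y$, which is a finite linear combination of elements of $\H_V$ (each $(D^\alpha\kappa^V)_y\in\H_V$ by the derivative reproducing property and the coefficients are scalars), hence $g_y\in\H_V$; reading off the definition of $(D^\alpha\kappa^V)_y$ shows that $g_y$, viewed as a function on $\X$, is exactly $K(\cdot,y)$ of \refeq{KK}. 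Then for any $\varphi=U(\varphi^V)\in\H_R$, I would expand $\varphi(y)=U(\varphi^V)(y)$ via \refeq{opeU} and rewrite $\beta\varphi^V(y)$ and each $D^\alpha\varphi^V(y)$ by the derivative reproducing property, obtaining $\varphi(y)=\inpro{\varphi^V,g_y}_{\H_V}=\inpro{U(\varphi^V),U(g_y)}_{\H_R}=\inpro{\varphi,U(g_y)}_{\H_R}$. Since $U(g_y)\in\H_R$, this is the reproducing identity with $\kappa(\cdot,y):=U(g_y)=U(K(\cdot,y))$, i.e.\ \refeq{kernelCT}; and $|\varphi(y)|\le\|\varphi\|_{\H_R}\|\kappa(\cdot,y)\|_{\H_R}$ shows the evaluation functionals are bounded, so $\H_R$ is indeed an RKHS whose reproducing kernel is given by \refeq{kernelCT}--\refeq{KK}.
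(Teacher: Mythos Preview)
Your proposal is correct and follows essentially the same route as the paper: both establish that $U$ is well defined and linear via the derivative reproducing property of \cite{zhou2008derivative}, prove injectivity by applying Dynkin's formula to (the discounted) $\varphi^V$ along the diffusion and then splitting into the two cases (i) $\beta>0$ with the decaying factor $e^{-\beta T}$ killing the bounded term, (ii) $\beta=0$ with stochastic asymptotic stability forcing $\varphi^V$ to be constant on ${\rm int}(\X)$ and the no-constants property of the Gaussian RKHS \cite{minh2010some} finishing it off, and finally identify $K(\cdot,y)$ as the Riesz representer of $U(\cdot)(y)$ in $\H_V$ to read off the reproducing kernel of $\H_R$. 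Your write-up is a touch more explicit about the side conditions (true-martingale justification, transfer of completeness, boundedness of evaluation functionals) than the paper's proof, but the underlying argument is the same.
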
	
	\vspace{-1em}
	\begin{proof}	
		See Appendices A and B in the supplementary document.
	\end{proof}	
	\vspace{-1em}
	Under Assumptions \ref{assump1} and \ref{assump2}, Theorem \ref{theoRKHS} implies that the VF $V^{\phi}$ can be uniquely determined by the immediate cost function $R^{\phi}$ for a policy $\phi$ if the VF is in an RKHS of a particular class.
	In fact, the relation between the VF and the immediate cost function in \refeq{bellman} is based on the assumption that the VF is twice continuously differentiable over ${\rm int}(\X)$, and the verification theorem (cf. \cite{fleming2006controlled}) states that, when the immediate cost function and a twice continuously differentiable function satisfying the relation \refeq{bellman} are given under certain conditions, the twice continuously differentiable function is indeed the VF.  In Theorem \ref{theoRKHS}, on the other hand, we first restrict the class of the VF by identifying an RKHS $\H_V$, and then approximate the immediate cost function in the RKHS $\H_R$ any element of which satisfies the relation \refeq{bellman}.
	Because the immediate cost $R^{\phi}(x(t))$ is observable, we can employ any kernel-based supervised learning to estimate the function $R^{\phi}$ in the RKHS $\H_{R}$, such as GPs and KFs, as elaborated later in Section \ref{sec:applications}.
	
	In the RKHS $\H_R$, an estimate of $R^{\phi}$ at time instant $n\in\integer_{\geq0}$ is given by
	$\hat{R}^{\phi}_n(x)=\sum_{i}^{r}c_i\kappa(x,x_i),\;c_i\in\Real,\;r\in\integer_{\geq0}$,
	where $\{x_i\}_{i\in\{1,2,...,r\}}\subset\X$ is the set of samples, and the reproducing kernel $\kappa$ is defined in \refeq{kernelCT}.
	An estimate of the VF $V^{\phi}$ at time instant $n\in\integer_{\geq0}$ is thus immediately obtained by
	$\hat{V}^{\phi}_n(x)=U^{-1}(\hat{R}^{\phi}_n)(x)=\sum_{i=1}^{r}c_iK(x,x_i)$, where $K$ is defined in \refeq{KK}.
	
	Note, when the system dynamics is described by an ordinary differential equation (i.e., $\eta=0$), the assumptions that $V^{\phi}$ is twice continuously differentiable and that
	$\kappa^V(\cdot,\cdot)\in C^{2\x2}(\X\x\X)$ are relaxed to that $V^{\phi}$ is continuously differentiable and that $\kappa^V(\cdot,\cdot)\in C^{2\x1}(\X\x\X)$, respectively.
	
	As an illustrative example of Theorem \ref{theoRKHS}, we show the case of the linear-quadratic regulator (LQR) below. 
	\vspace{-1em}
	\paragraph{Special case: linear-quadratic regulator}
	Consider a {\em linear} feedback $\phi_{\rm LQR}$, i.e., $\phi_{\rm LQR}(x)=-F_{\rm LQR}x,\;F_{\rm LQR}\in\Real^{n_u\x n_x}$, and a linear system $\dot{x}:=\frac{dx}{dt}=A_{\rm LQR}x+B_{\rm LQR}u$, where $A_{\rm LQR}\in\Real^{n_x\x n_x}$ and $B_{\rm LQR}\in\Real^{n_x \x n_u}$ are matrices.
	In this case, we know that the value function $V^{\phi_{\rm LQR}}$ becomes quadratic with respect to the state variable (cf. \cite{zhou1996robust}). 
	Therefore, we employ an RKHS with a quadratic kernel for $\H_V$, i.e., $\kappa^V(x,y)=(x^{\T}y)^2$.
	If we assume that the input space $\X$ is so {\em large} that the set ${\rm span}\{A_{\rm sym}|A_{\rm sym}=xx^{\T},~\exists x\in\X\}$ accommodates any
	real symmetric matrix, we obtain $\H_V=\{\X\ni x\mapsto x^{\T}A_{\rm sym}x|A_{\rm sym}~{\rm is}~{\rm symmetric}\}$.
	Moreover, it follows from the product rule of the directional derivative \cite{bonet1997nonlinear} that $K(x,y)=-x^{\T}\overline{A}_{\rm LQR}yx^{\T}y-x^{\T}yx^{\T}\overline{A}_{\rm LQR}y=x^{\T}(-\overline{A}_{\rm LQR}yy^{\T}-yy^{\T}\overline{A}^{\T}_{\rm LQR})x$, where $\overline{A}_{\rm LQR}:=A_{\rm LQR}-B_{\rm LQR}F_{\rm LQR}$.
	Note $A_{\rm value}(y):=-\overline{A}_{\rm LQR}yy^{\T}-yy^{\T}\overline{A}^{\T}_{\rm LQR}$ is symmetric, implying $K(\cdot,y)\in\H_{V}$, and
	we obtain
	$\kappa(x,y)=-x^{\T}(\overline{A}^{\T}_{\rm LQR}A_{\rm value}(y)+A_{\rm value}(y)\overline{A}_{\rm LQR})x$.  Because $A_{\rm cost}(y):=-\overline{A}^{\T}_{\rm LQR}A_{\rm value}(y)-A_{\rm value}(y)\overline{A}_{\rm LQR}$ is symmetric, it follows that $\kappa(\cdot,y)\in\H_{V}$.
	If $\overline{A}_{\rm LQR}$ is stable (Hurwitz), from Theorem \ref{theoRKHS}, the one-to-one correspondence between $\H_{V}$ and $\H_{R}$ thus implies that $\H_{V}=\H_{R}$. 
	Therefore, we can fully approximate 
	the immediate cost function $R^{\phi_{\rm LQR}}$ in $\H_R$ if $R^{\phi_{\rm LQR}}$ is quadratic with respect to the state variable.
	Suppose that the immediate cost function is given by $R^{\phi_{\rm LQR}}(x)=\sum_{i=1}^r c_i\kappa(x,x_i)=x^{\T}\overline{A}_{\rm cost}x$.
	Then, the estimated value function will be given by $V^{\phi_{\rm LQR}}(x)=U^{-1}(R^{\phi_{\rm LQR}})(x)=\sum_{i=1}^r c_iK(x,x_i)=-x^{\T}\overline{A}_{\rm value}x$,
	where
	$
	\overline{A}^{\T}_{\rm LQR}\overline{A}_{\rm value}+\overline{A}_{\rm value}\overline{A}_{\rm LQR}+\overline{A}_{\rm cost}=0,
	$
	which is indeed the well-known Lyapunov equation \cite{zhou1996robust}.
	Unlike Gaussian-kernel cases, we only require a {\em finite} number of parameters to fully approximate the immediate cost function, and hence is analytically obtainable.
	\vspace{-1em}
	\paragraph {Barrier-certified policy updates under CT formulation}
	Next, we show that the CT formulation makes barrier-certified policy updates computationally more efficient under certain conditions.
	Assume that the system dynamics is affine in the control, i.e.,
	$h(x,u)=f(x)+g(x)u$, and $\eta=0$,
	where $f:\Real^{n_x}\rightarrow\Real^{n_x}$ and $g:\Real^{n_x}\rightarrow\Real^{n_x\x n_u}$,
	and that the immediate cost $R(x,u)$ is given by
	$Q(x)+\frac{1}{2}u^{\T}Mu$,
	where $Q:\Real^{n_x}\rightarrow\Real$, and $M\in\Real^{n_u\x n_u}$ is a positive definite matrix.
	Then, any Lipschitz continuous policy $\phi:\X\rightarrow\U$ satisfying
	$
	\phi(x)\in\mathcal{S}(x):=\left\{u\in\U\mid\frac{\partial b(x)}{\partial x}f(x)+\frac{\partial b(x)}{\partial x}g(x)u+\alpha(b(x))\geq 0\right\} 
	$
	renders the set $\mathcal{C}$ forward invariant \cite{xu2015robustness}, i.e., the state trajectory
	remains inside the set $\mathcal{C}$, where $\alpha:\Real\rightarrow\Real$ is strictly increasing and $\alpha(0)=0$.
	Taking this constraint into account, the barrier-certified greedy policy update is given by
	\begin{align}
	\phi^+(x)=\displaystyle\argmin_{u\in\mathcal{S}(x)}\left[\frac{1}{2}u^{\T}Mu+{\frac{\partial V^{\phi}(x)}{\partial x}}g(x)u\right], \label{poliplus}
	\end{align}
	which is, by virtue of the CT formulation, a quadratic programming (QP) problem at $x$ when $\U\subset\Real^{n_u}$ defines affine constraints (see Appendix C in the supplementary document).
	The space of allowable policies is thus given by $\Gamma:=\{\phi\in\Phi\mid\phi(x)\in\mathcal{S}(x),\;\forall x\in\X\}$.
	When $\eta\neq0$ and the dynamics is learned by GPs as in \cite{pan2014probabilistic}, the work in \cite{wang2017safe} provides
	a barrier certificate for uncertain dynamics.
	Note, one can also employ a function approximator or add noises to the greedily updated policy to avoid unstable performance and promote exploration (see e.g., \cite{doya2000reinforcement}).
	To see if the updated policy $\phi^+$ remains in the space of Lipschitz continuous policies $\Phi$, i.e., $\Gamma\subset\Phi$,
	we present the following proposition.
	\begin{proposition}
		\label{LipQP}
		Assume the conditions in Theorem \ref{theoRKHS}.
		Assume also that $\U\subset\Real^{n_u}$ defines affine constraints, and that $f$, $g$, $\alpha$, and the derivative of $b$ are Lipschitz continuous over $\X$.  
		Then, the policy $\phi^+$ defined in \refeq{poliplus} is Lipschitz continuous over $\X$
		if the \emph{width of a feasible set}\footnote{\emph{Width} indicates how much control margin is left for the strictest constraint, as defined in {\cite[Equation~21]{morris2013sufficient}}.} is strictly larger than zero over $\X$.
	\end{proposition}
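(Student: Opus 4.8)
The plan is to read \refeq{poliplus} as a parametric strictly convex quadratic program (QP) in the decision variable $u$ and to establish Lipschitz dependence of its unique minimizer on the state parameter $x$. Collecting the constraints as $A(x)u\le d(x)$ --- where the rows gather the affine description of $\U$ together with the single barrier row $-\tfrac{\partial b(x)}{\partial x}g(x)\,u\le \tfrac{\partial b(x)}{\partial x}f(x)+\alpha(b(x))$ --- and setting $c(x):=\bigl(\tfrac{\partial V^{\phi}(x)}{\partial x}g(x)\bigr)^{\T}$, problem \refeq{poliplus} reads $\min_{u}\tfrac12 u^{\T}Mu+c(x)^{\T}u$ subject to $A(x)u\le d(x)$. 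Since $M\succ0$, the objective is strongly convex in $u$ with modulus $\lambda_{\min}(M)>0$ independent of $x$, so $\phi^{+}(x)$ is well defined and single-valued; it then remains to show that $x\mapsto\phi^{+}(x)$ is Lipschitz on $\X$.

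First I would check that all the problem data are Lipschitz on the compact set $\X$. Because $V^{\phi}$ (equivalently its RKHS estimate $\sum_i c_i K(\cdot,x_i)$) lies in $\H_V$, whose reproducing kernel is in $C^{2\times2}(\X\times\X)$, it is twice continuously differentiable on $\X$, so $x\mapsto\tfrac{\partial V^{\phi}(x)}{\partial x}$ is $C^1$ and therefore bounded and Lipschitz on the compact $\X$; combined with the assumed Lipschitz continuity and (compactness-induced) boundedness of $g$, the product $c(x)$ is Lipschitz. Likewise $\tfrac{\partial b}{\partial x}$, $f$, $g$, $\alpha$ are Lipschitz and bounded on $\X$ and $b$ is smooth, hence Lipschitz on $\X$, so both the barrier coefficient $\tfrac{\partial b(x)}{\partial x}g(x)$ and the right-hand side $\tfrac{\partial b(x)}{\partial x}f(x)+\alpha(b(x))$ are Lipschitz, while the rows describing $\U$ are constant; thus $c(\cdot)$, $A(\cdot)$, $d(\cdot)$ are all Lipschitz on $\X$.

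It remains to turn Lipschitz continuity of the data into Lipschitz continuity of the minimizer, and this is where the width hypothesis enters. Strict positivity of the width of the feasible set over $\X$ is exactly the uniform (Slater / Mangasarian--Fromovitz-type) constraint qualification used in \cite{morris2013sufficient}; it ensures that the feasible-set multifunction $x\mapsto\{u\in\U\mid A(x)u\le d(x)\}$ is nonempty-valued and Lipschitz continuous in the Hausdorff metric, with modulus controlled by the Lipschitz constants of $A,d$ and the reciprocal of the width. Feeding this, together with the uniform strong convexity of the objective and the Lipschitz continuity of $c(\cdot)$, into the quantitative parametric-QP stability estimate of \cite{morris2013sufficient} yields the claim: one bounds $\norm{\phi^{+}(x_1)-\phi^{+}(x_2)}$ by a term $\propto\norm{c(x_1)-c(x_2)}/\lambda_{\min}(M)$ coming from perturbing the objective plus a term $\propto$ the Hausdorff distance between the feasible sets, itself $\propto$ (Lipschitz data)$\cdot\norm{x_1-x_2}/(\text{width})$. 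The main obstacle is precisely this last passage --- from Hausdorff-Lipschitz variation of the constraint set to Lipschitz variation of the optimal solution --- which is not automatic for general convex programs: it genuinely exploits both the uniform strong convexity of the objective and the strict positivity of the width over the compact $\X$ (the latter keeping the set-perturbation term finite and linear in $\norm{x_1-x_2}$). Everything else --- Lipschitzness of the composite data and uniqueness of the minimizer --- is routine bookkeeping using compactness of $\X$ and positive-definiteness of $M$.
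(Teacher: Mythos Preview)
Your proposal is correct and follows essentially the same route as the paper: both arguments cast \refeq{poliplus} as a parametric strictly convex QP, verify that the objective and constraint data $c(\cdot),A(\cdot),d(\cdot)$ are Lipschitz on the compact set $\X$ (using $\kappa^V\in C^{2\times 2}(\X\times\X)$ for the $V^{\phi}$-term and products/compositions of Lipschitz maps for the barrier row), and then invoke the QP sensitivity result of \cite{morris2013sufficient} together with the strictly positive width assumption. The only cosmetic difference is that the paper's proof additionally notes an augmentation trick to vacuously satisfy the full-row-rank condition on the equality-constraint matrix in the cited theorem, since \refeq{poliplus} has no equality constraints; your formulation simply omits equality constraints, which is equivalent.
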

	\vspace{-1em}
	\begin{proof}
		See Appendix D in the supplementary document.
	\end{proof}	
	\vspace{-1em}
	Note, if $\U\subset\Real^{n_x}$ defines the bounds of each entry of control inputs, it defines affine constraints.
	Lastly, the width of a feasible set is strictly larger than zero if $\U$ is sufficiently large and $\frac{\partial b(x)}{\partial x}g(x)\neq0$.
	
	We will further clarify the relations of the proposed theoretical framework to existing works below.
	\section{Relations to existing works}
	First, our proposed framework takes advantage of the capability of learning complicated functions and nonparametric flexibility of RKHSs,
	and reproduces some of the existing {\em model-based} DT-VF approximation techniques (see Appendix E in the supplementary document).
	Note that some of the existing DT-VF approximations in RKHSs, such as GPTD \cite{engel2005reinforcement}, also work for model-free cases
	(see \cite{ohnishi2018safety} for model-free adaptive DT action-value function approximation, for example). 
	Second, since the RKHS $\H_{R}$ for learning is explicitly defined in our framework, any kernel-based method and its convergence/tracking analyses are
	directly applicable.
	While, for example, the work in \cite{koppel2017parsimonious}, which aims at attaining a sparse representation of the unknown function
	in an online fashion in RKHSs, was extended to the policy evaluation \cite{koppel2017policy} by addressing the double-sampling problem,
	our framework does not suffer from the double-sampling problem, and hence any kernel-based online learning (e.g., \cite{koppel2017parsimonious, yukawa_tsp12, tayu_tsp15}) can be straightforwardly applied.
	Third, when the time interval is small, DT formulations become susceptible to errors, while CT formulations are immune to the choice of the time interval.
	Note, on the other hand, a larger time interval poorly represents the system dynamics evolving in continuous time.
	Lastly, barrier certificates are efficiently incorporated in our CT framework through QPs under certain conditions, and state constraints are implicitly taken into account.
	Stochastic optimal control such as the work in \cite{theodorou2010stochastic,theodorou2010reinforcement} requires sample trajectories over predefined finite time horizons and the value
	is computed along the trajectories while the VF is estimated in an RKHS even without having to follow the trajectory in our framework.
	\section{Applications and practical implementation}
	\label{sec:applications}
	We apply the theory presented in the previous section to the Gaussian kernel case and
	introduce CTGP as an example, and
	present a practical implementation.
	Assume that $A(x,u)\in\Real^{n_x\x n_x}$ is diagonal, for simplicity.
	The Gaussian kernel is given by
	$\kappa^V(x,y):=
	\dfrac{1}{(2\pi\sigma^2)^{L/2}}
	\exp\left(-\dfrac{\norm{x - y}_{\real^{n_x}}^2}
	{2\sigma^2}\right)$, $\;x,y\in\X$, $\sigma>0$.
	Given Gaussian kernel $\kappa^V(x,y)$, the reproducing kernel $\kappa(x,y)$ defined in \refeq{kernelCT} is derived as
	$\kappa(x,y)=a(x,y)\kappa^V(x,y)$,
	where $a:\X\x\X\rightarrow\Real$ is a real-valued function (see Appendix F in the supplementary document for the explicit form of $a(x,y)$).
	\vspace{-1em}
	\paragraph {CTGP}
	One of the celebrated properties of GPs is their Bayesian formulation, which
	enables us to deal with uncertainty through credible intervals.
	Suppose that the observation $d$ at time instant $n\in\integer_{\geq0}$ contains some noise $\epsilon\in\Real$, i.e.,
	$
	d(x)=R^{\phi}(x)+\epsilon,\;\;\epsilon\sim\mathcal{N}(0,\mu_o^2),\;\mu_o\geq0.
	$
	Given data $d_N:=[d(x_0),d(x_1),\ldots,d(x_N)]^{\T}$ for some $N\in\integer_{\geq0}$, we can employ GP regression to obtain
	the mean $m(x_{*})$ and the variance $\mu^2(x_{*})$ of $\hat{R}^{\phi}(x_{*})$ at a point $x_{*}\in\X$ as
	\begin{align}
	m(x_{*})=k_{*}^{\T}(G+\mu_o^2I)^{-1}d_N,\;\;\;\;\;\mu^2(x_{*})=\kappa(x_{*},x_{*})-k_{*}^{\T}(G+\mu_o^2I)^{-1}k_{*},
	\end{align}	
	where $I$ is the identity matrix, $k_{*}:=[\kappa(x_{*},x_0),\kappa(x_{*},x_1),\ldots,\kappa(x_{*},x_N)]^{\T}$, and the $(i,j)$ entry of $G\in\Real^{(N+1)\x (N+1)}$ is $\kappa(x_{i-1},x_{j-1})$.
	Then, by the existence of the inverse operator $U^{-1}$, the mean $m^V(x_{*})$ and the variance ${\mu^V}^2(x_{*})$ of $\hat{V}^{\phi}(x_{*})$ at a point $x_{*}\in\X$ can be given by
	\begin{align}
	\hspace{-1em}m^V(x_{*})={K^V_{*}}^{\T}(G+\mu_o^2I)^{-1}d_N,\;{\mu^V}^2(x_{*})=\kappa^V(x_{*},x_{*}) -{K_{*}^V}^{\T}(G+\mu_o^2I)^{-1}K_{*}^V, \label{meancov}
	\end{align}	
	where $K^V_{*}:=[K(x_{*},x_0),K(x_{*},x_1),\ldots,K(x_{*},x_N)]^{\T}$ (see Appendix G in the supplementary document for more details).
	\section{Numerical Experiment}
	In this section, we first show the validity of the proposed CT framework and its advantage over DT counterparts when the time interval is small,
	and then compare CTGP and GPTD for RL on a simulated inverted pendulum.
	In both of the experiments, the coherence-based sparsification \cite{richard09} in the RKHS $\H_{R}$ is employed to curb the growth of the dictionary size.
	\vspace{-1em}
	\paragraph {Policy evaluations: comparison of CT and DT approaches}
	We show that our CT approaches are advantageous over DT counterparts in terms of susceptibility to errors, by using
	MountainCarContinuous-v0 in OpenAI Gym \cite{openaigym} as the environment.
	The state is given by $x(t):=[{\rm x}(t),{\rm v}(t)]^{\T}\in\Real^2$, where ${\rm x}(t)$ and ${\rm v}(t)$ are the position and the velocity of the car, and the dynamics is
	given by
	$
	dx =
	\left[
	\begin{array}{c}
	{\rm v}(t) \\
	-0.0025\cos{(3{\rm x}(t))}
	\end{array}
	\right]dt+\left[
	\begin{array}{c}
	0\\
	0.0015
	\end{array}
	\right]u(t)dt,
	$
	where $u(t)\in[-1.0,1.0]$.
	The position and the velocity are clipped to $[-0.07,0.07]$ and $[-1.2,0.6]$, respectively, and the goal is to reach the position ${\rm x}=0.45$.
	In the simulation, the control cycle (i.e., the frequency of applying control inputs and observing the states and costs) is set to $1.0$ second.
	The observed immediate cost is given by $R(x(t),u(t))+\epsilon=1+0.001u^2(t)+\epsilon$ for ${\rm x}(t)< 0.45$ and $R(x(t),u(t))+\epsilon=0.001u^2(t)+\epsilon$ for ${\rm x}(t)\geq0.45$, where $\epsilon\sim\mathcal{N}(0,0.1^2)$.  Note the immediate cost for the DT cases is given by $(R(x(t),u(t))+\epsilon)\Delta t$, where $\Delta t$ is the time interval.
	For policy evaluations, we use the policy obtained by RL based on the cross-entropy methods\footnote{We used the code in https://github.com/udacity/deep-reinforcement-learning/blob/master/cross-entropy/CEM.ipynb offered by Udacity.  The code is based on PyTorch \cite{paszke2017automatic}.}, and
	the four methods, CTGP, KF-based CT-VF approximation (CTKF), GPTD, and KF-based DT-VF approximation (DTKF),
	are used to learn value functions associated with the policy by executing the current policy for five episodes, each of which terminates
	whenever $t=300$ or ${\rm x}(t)\geq0.45$.  GP-based techniques are expected to handle the random component $\epsilon$ added to the immediate cost.
	The new policies are then obtained by the barrier-certified policy updates under CT formulations, and
	these policies are evaluated for five times.
	Here, the barrier function is given by $b(x)=0.05+{\rm v}$, which prevents the velocity from
	becoming lower than $-0.05$.
	\reffig{comparisonPE} compares the value functions\footnote{We used "jet colormaps" in Python Matplotlib for illustrating the value functions.} learned by each method for the time intervals $\Delta t=20.0$ and $\Delta t=1.0$.
	We observe that the DT approaches are sensitive to the choice of $\Delta t$. 
	\reftab{compcosts} compares the cumulative costs averaged over five episodes for each method and for different time intervals and the numbers of times we observed the velocity being lower than $-0.05$ when the barrier certificate is employed and unemployed. (Numbers associated with the algorithm names indicate the lengths of the time intervals.)
	Note that the cumulative costs are calculated by summing up the immediate costs multiplied by the duration of each control cycle, i.e., we discretized the immediate cost based on the control cycle.
	It is observed that the CT approach is immune to the choice of $\Delta t$ while the performance of the DT approach degrades
	when the time interval becomes small, and that the barrier-certified policy updates work efficiently.
	\begin{figure}[t]
		\begin{minipage}{0.33\hsize}
			\begin{center}
				\includegraphics[clip,width=0.85\textwidth]{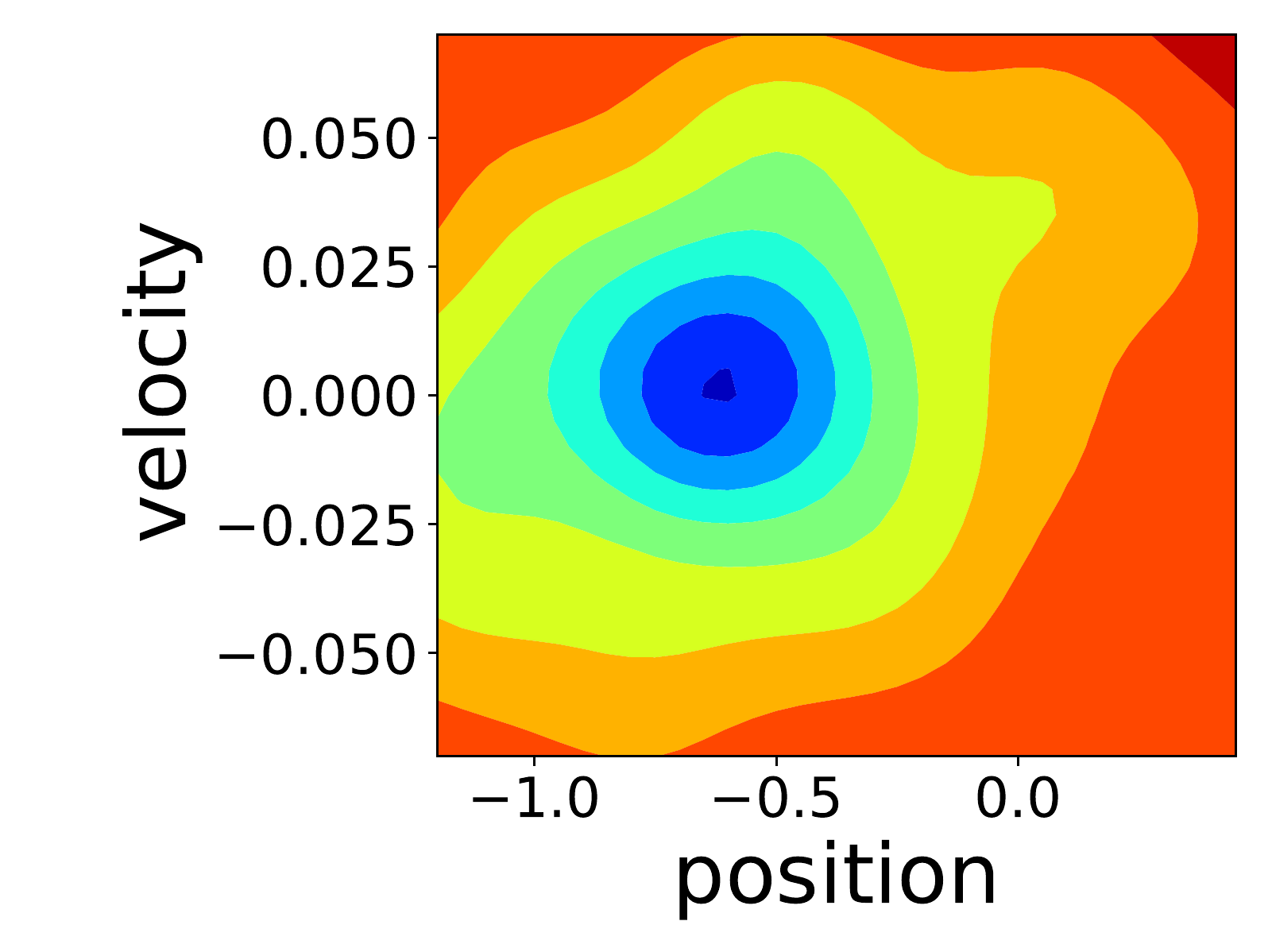}
				\label{fig:GPTD_20}
				\centerline{(a) GPTD for $\Delta t=20.0$}
			\end{center}	
		\end{minipage}
		\begin{minipage}{0.33\hsize}
			\begin{center}
				\includegraphics[clip,width=0.85\textwidth]{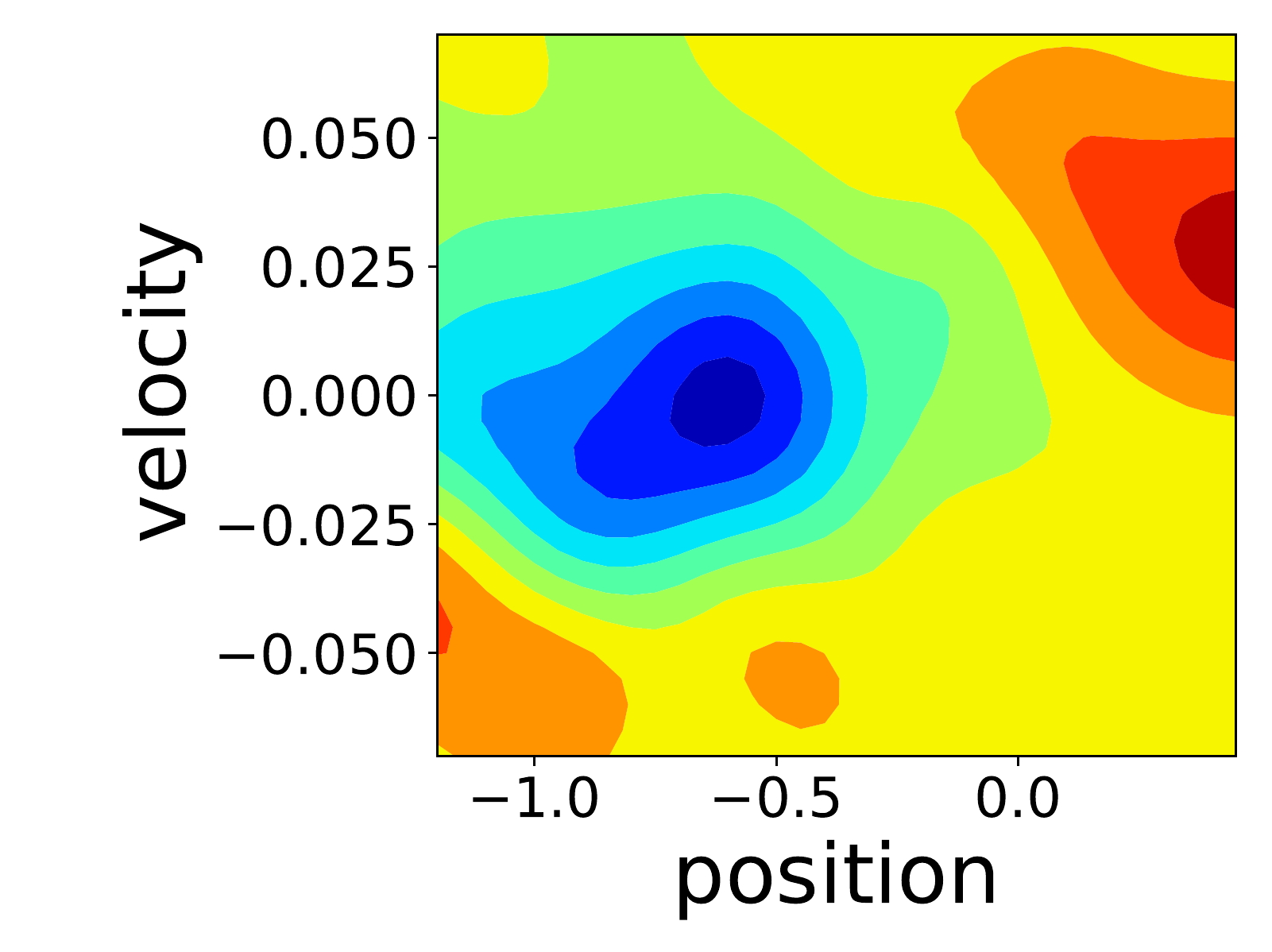}
				\label{fig:GPTD_1}
				\centerline{(b) GPTD for $\Delta t=1.0$}
			\end{center}
		\end{minipage}
		\begin{minipage}{0.33\hsize}
			\begin{center}
				\includegraphics[clip,width=0.85\textwidth]{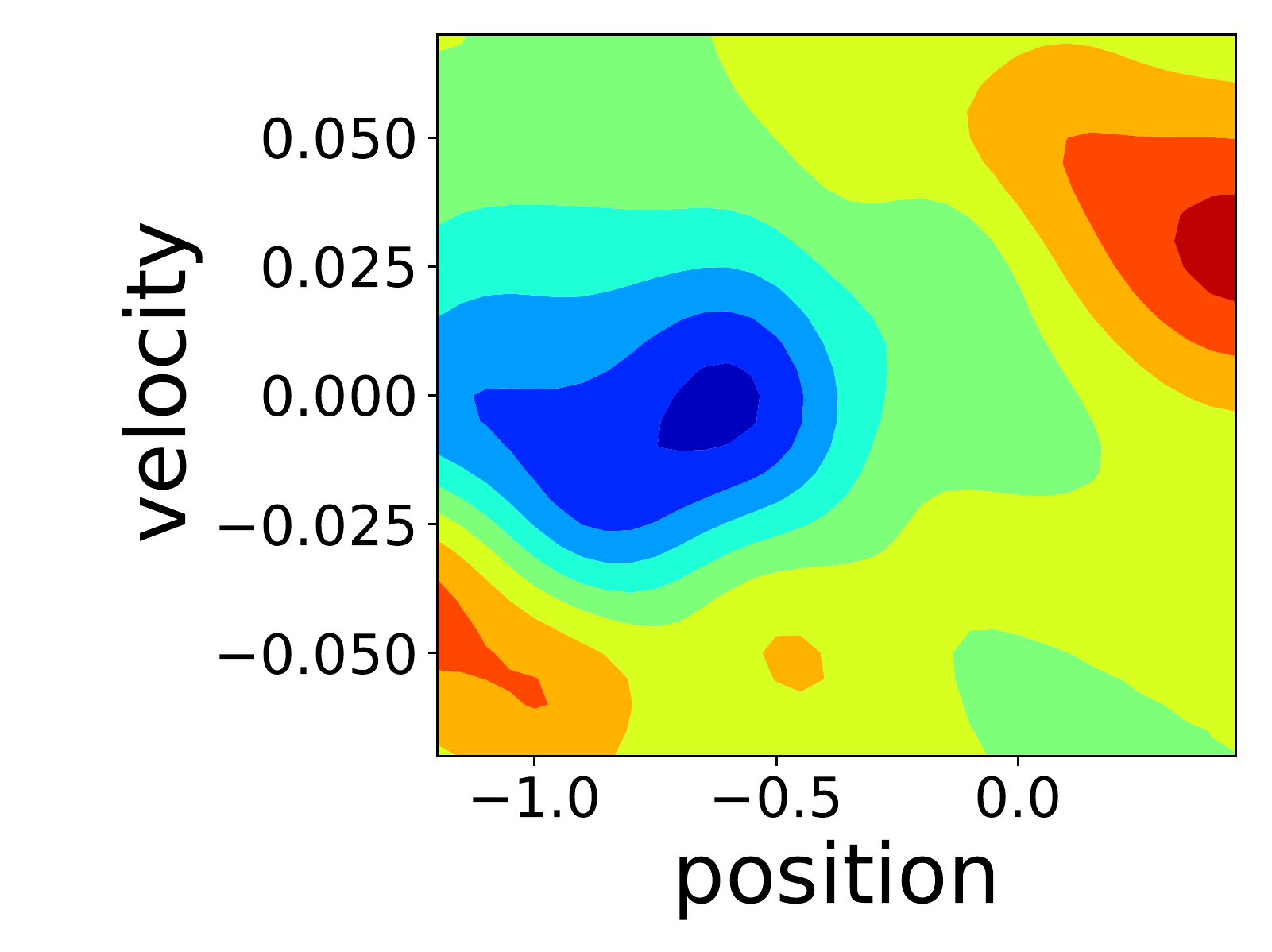}
				\label{fig:CTGP}
				\centerline{(c) CTGP}
			\end{center}
		\end{minipage}
		\begin{minipage}{0.33\hsize}
			\begin{center}
				\includegraphics[clip,width=0.85\textwidth]{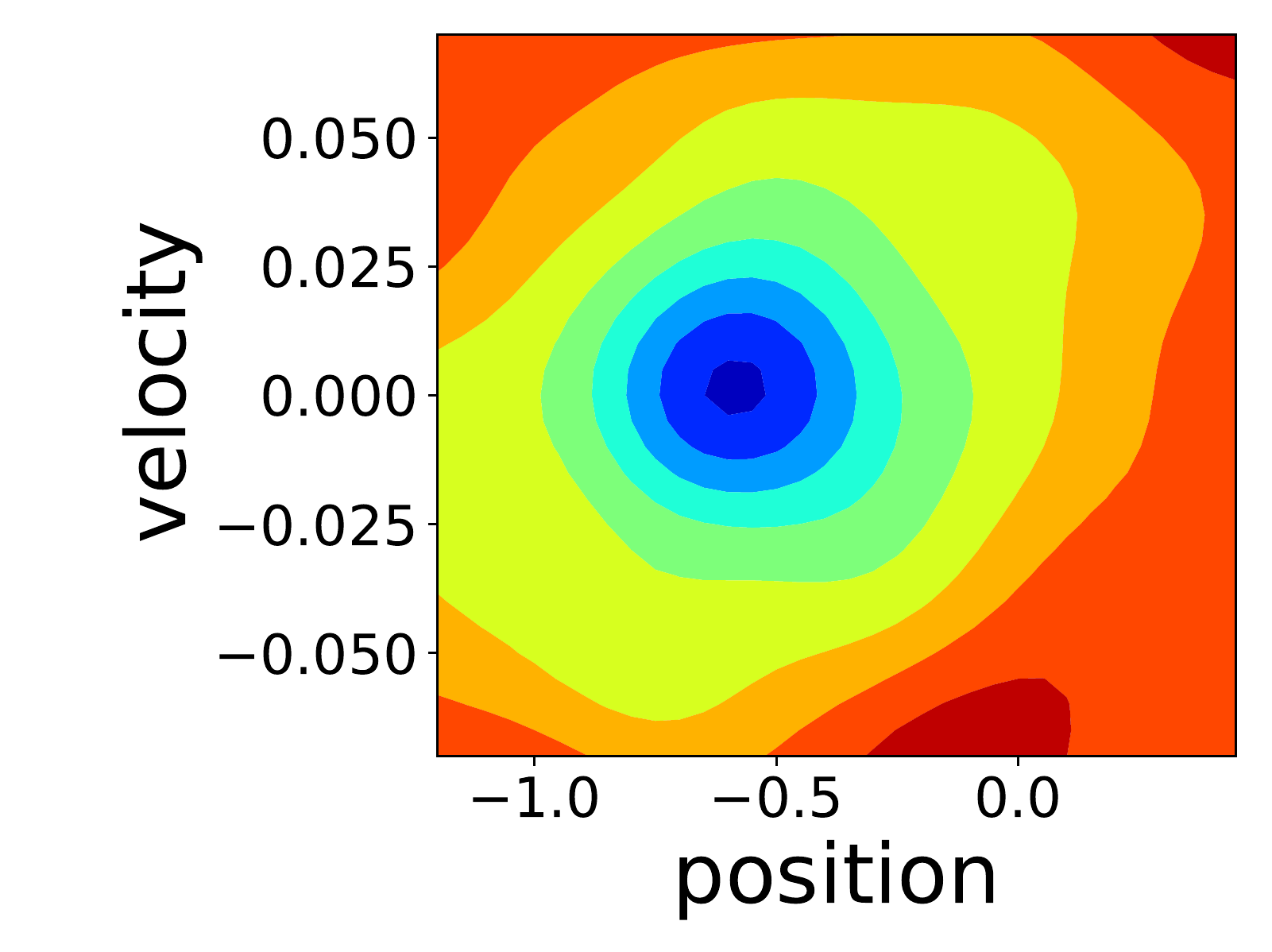}
				\label{fig:DTKF_20}
				\centerline{(d) DTKF for $\Delta t=20.0$}
			\end{center}
		\end{minipage}
		\begin{minipage}{0.33\hsize}
			\begin{center}
				
				\includegraphics[clip,width=0.85\textwidth]{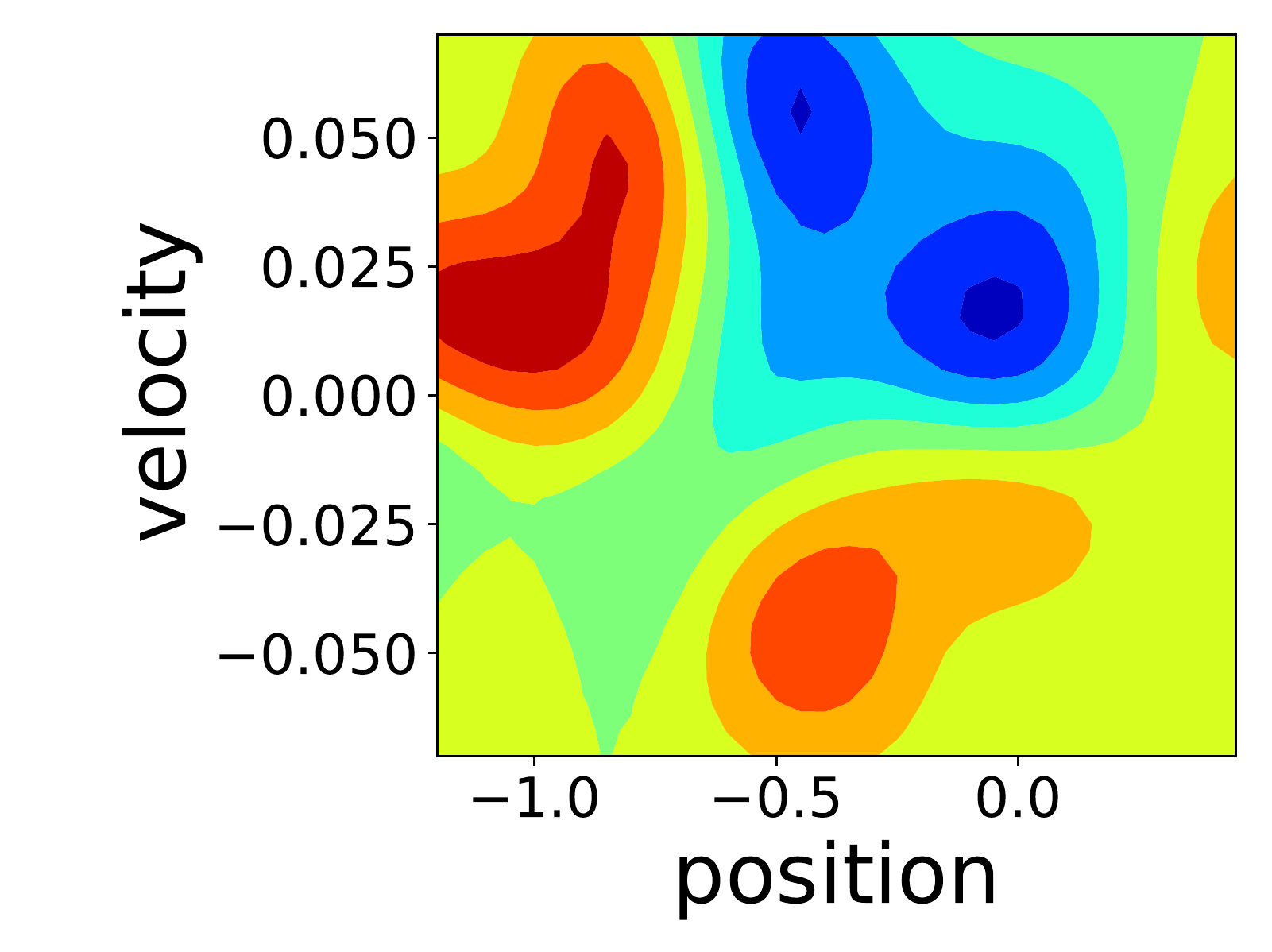}
				\label{fig:DTKF_1}
				\centerline{(e) DTKF for $\Delta t=1.0$}
			\end{center}
		\end{minipage}
		\begin{minipage}{0.33\hsize}
			\begin{center}
				\hspace{3em}
				\hspace{1em}
				\includegraphics[clip,width=0.85\textwidth]{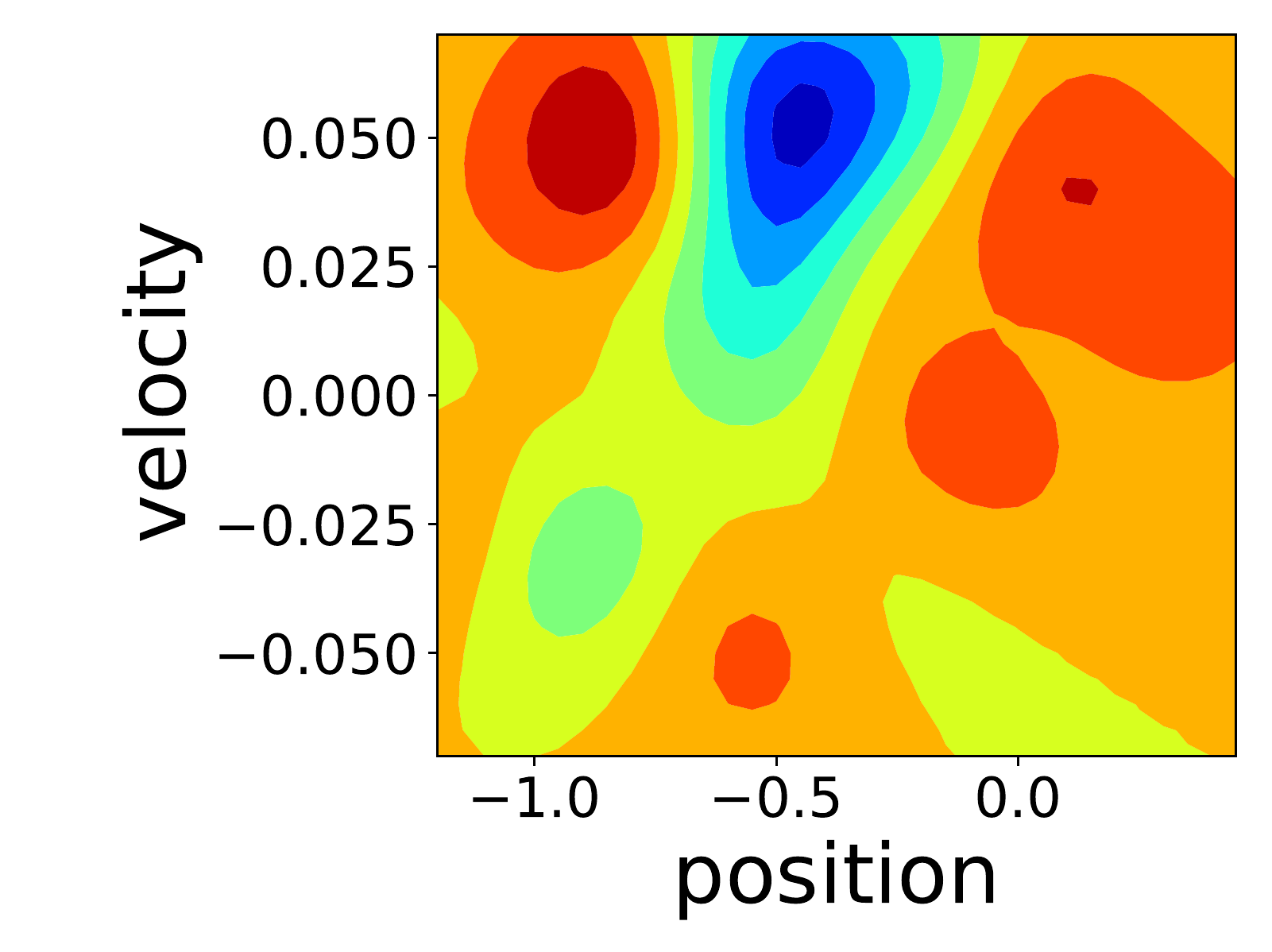}
				\label{fig:CTKF}
				\centerline{(f) CTKF}
			\end{center}
		\end{minipage}
		\caption{Illustrations of the value functions obtained by CTGP, CTKF, GPTD, and DTKF for time intervals $\Delta t=20.0$ and $\Delta t=1.0$.
			The policy is obtained by RL based on the cross-entropy method.  CT approaches are not affected by the choice of $\Delta t$.}
		\label{comparisonPE}
	\end{figure}
	\begin{table}[t]
		\caption{Comparisons of the cumulative costs and numbers of times the observed velocities became lower than $-0.05$ with and without barrier certificates}
		\label{compcosts}
		\centering
		\begin{tabular}{lllllll}
			\toprule
			& CTKF & GPTD\_1 & DTKF\_1 & CTGP & GPTD\_20 & DTKF\_20 \\
			\midrule
			Cumulative cost & $114.2$ & $299.1$ & $299.1$ & $82.2$ & $89.2$& $90.4$ \\
			With barrier & $0$ (times) & $0$ (times) & $0$ (times) & $0$ (times) & $0$ (times)& $0$ (times) \\
			Without barrier & $0$ (times) & $0$ (times) & $0$ (times) & $10$ (times)& $20$ (times)& $20$ (times) \\
			\bottomrule
		\end{tabular}
	\end{table}
	\vspace{-1em}
	\paragraph {Reinforcement learning: inverted pendulum}
	\begin{wrapfigure}{R}{0.35\textwidth}
		\vspace{-5em}
		\includegraphics[clip,width=0.35\textwidth]{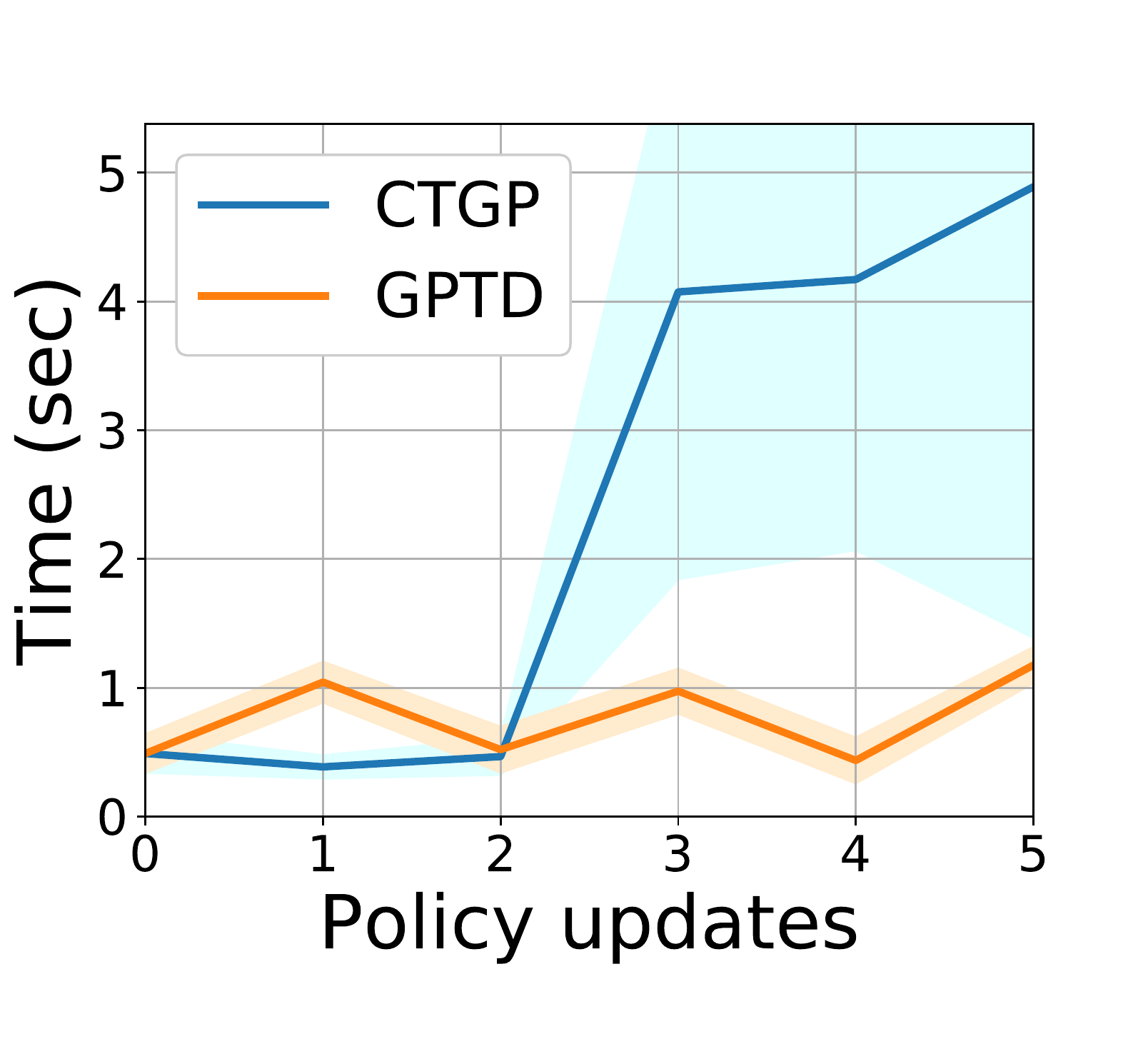}
		\label{fig:Pendulum}
		\caption{Comparison of time up to which the pendulum stays up between CTGP and GPTD for the inverted pendulum ($\pm$ std. deviation).}
		\label{comparison}
	\end{wrapfigure}
	We show the advantage of CTGP over GPTD when the time interval for the estimation is small.
	Let the state $x(t):=[\theta(t),\omega(t)]^{\T}\in\Real^2$ consists of the angle $\theta(t)$ and the angular velocity $\omega(t)$ of an inverted pendulum, and we consider the dynamics: 
	$
	dx = 
	\left[
	\begin{array}{c}
	\omega(t) \\
	\frac{g}{\ell}{\rm sin}(\theta(t))-\frac{\rho}{m\ell^2}\omega(t)
	\end{array}
	\right]dt+\left[
	\begin{array}{c}
	0\\
	\frac{1}{m\ell^2}
	\end{array}
	\right]u(t)dt+
	0.01Idw,
	$
	where $g=9.8,\;m=1,\;\ell=1,\;\rho=0.01$.
	The Brownian motion may come from outer disturbances and/or modeling error.
	In the simulation, the time interval $\Delta t$ is set to $0.01$ seconds, and the simulated dynamics evolves by
	$\Delta x=h(x(t),u(t))\Delta t+\sqrt{\Delta t}\eta(x(t),u(t))\epsilon_w$, where $\epsilon_w\sim\mathcal{N}(0,I)$.
	In this experiment, the task is to stabilize the inverted pendulum at $\theta=0$.
	The observed immediate cost is given by $R(x(t),u(t))+\epsilon=1/(1+e^{-10(\theta(t)-\pi/16)})+100/(1+e^{-10(\theta(t)-\pi/6)})+0.05u^2(t)+\epsilon$, where $\epsilon\sim\mathcal{N}(0,0.1^2)$.
	A trajectory associated with the current policy is generated to learn the VF. 
	The trajectory terminates when $|\theta(t)|>\pi/4$ and restarts from a random initial angle.
	After $10$ seconds, the policy is updated.
	To evaluate the current policy, average time over five episodes in which the pendulum stays up ($|\theta(t)|\leq\pi/4$) when initialized as
	$\theta(0)\in[-\pi/6,\pi/6]$ is used.  \reffig{comparison} compares this average time of CTGP and GPTD up to five updates
	with standard deviations until when stable policy improvement becomes difficult without some heuristic techniques such as adding noises to policies.
	Note that the same seed for the random number generator is used for the initializations of both of the two approaches.
	It is observed that GPTD fails to improve policies.  The large credible interval of CTGP is due to the random initialization of the state.
	
	\section{Conclusion and future work}
	We presented a novel theoretical framework that renders the CT-VF approximation problem 
	simultaneously solvable in an RKHS by conducting 
	kernel-based supervised learning for the immediate cost function in
	the properly defined RKHS.
	Our CT framework is compatible with rich theories of control, including control barrier certificates for safety-critical applications.
	The validity of the proposed framework and its advantage over DT counterparts when the time interval is small were verified experimentally
	on the classical Mountain Car problem and a simulated inverted pendulum.
	
	There are several possible directions to explore as future works; First, we can employ the state-of-the-art kernel methods
	within our theoretical framework or use other variants of RL, such as actor-critic methods, to improve practical performances.
	Second, we can consider uncertainties in value function approximation by virtue of the RKHS-based formulation, which
	might be used for safety verifications.
	Lastly, it is worth further explorations of advantages of CT formulations for physical tasks.
	\clearpage
	\subsubsection*{Acknowledgments}
	This work was partially conducted when M. Ohnishi was at the GRITS Lab, Georgia Institute of Technology;
	M. Ohnishi thanks the members of the GRITS Lab, including Dr. Li Wang, and Prof. Magnus Egerstedt for discussions regarding
	barrier functions.
	M. Yukawa was supported in part by KAKENHI 18H01446 and 15H02757,
	M. Johansson was supported in part by the Swedish Research Council and by the Knut and Allice Wallenberg Foundation, and
	M. Sugiyama was supported in part by KAKENHI 17H00757.
	Lastly, the authors thank all of the anonymous reviewers for their very insightful comments.
	
		\begin{appendices}
		\numberwithin{equation}{section}
		\numberwithin{theorem}{section}
		\numberwithin{lemma}{section}
		\numberwithin{proposition}{section}
		\numberwithin{remark}{section}
		\numberwithin{corollary}{section}
		\numberwithin{table}{section}
		\numberwithin{figure}{section}
		\numberwithin{fact}{section}
		\renewcommand{\theequation}{\thesection.\arabic{equation}}
		\section{Tools to prove Theorem 1}
		Some known properties of RKHSs and Dynkin's formula which will be used to prove Theorem 1 are given below.
		\begin{lemma}[{\cite[Theorem~2]{minh2010some}}]
			\label{hojo1}
			Let $\X\subset\Real^{n_x}$ be any set with nonempty interior.  Then, the RKHS associated with the Gaussian kernel for an arbitrary scale parameter $\sigma>0$
			does not contain any polynomial on $\X$, including the nonzero constant function.
		\end{lemma}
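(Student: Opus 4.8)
The plan is to reduce the assertion to the statement that the Gaussian RKHS over the \emph{whole} space, $\H_\sigma$ (the RKHS of the Gaussian kernel on $\Real^{n_x}\x\Real^{n_x}$, of which $\H_V$ is the restriction to $\X$), contains no nonzero polynomial, and then to prove that reduced statement by showing that every element of $\H_\sigma$ belongs to $C_0(\Real^{n_x})$, the space of continuous functions vanishing at infinity.

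First I would invoke the standard restriction property of reproducing kernels: the RKHS of $\kappa^V$ restricted to $\X\x\X$ consists exactly of the restrictions $f|_\X$ with $f\in\H_\sigma$. Next, since $x\mapsto\kappa^V(x,y)$ is the restriction of an entire function for each fixed $y$, every $f\in\H_\sigma$ is real-analytic on $\Real^{n_x}$ (this follows from the power-series feature expansion of the Gaussian kernel, cf. \cite{minh2010some}). Because $\Real^{n_x}$ is connected and ${\rm int}(\X)\neq\emptyset$, a real-analytic function vanishing on ${\rm int}(\X)$ vanishes identically, so the restriction map $f\mapsto f|_\X$ is a bijection from $\H_\sigma$ onto the restricted RKHS. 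Hence, if the restricted RKHS contained a polynomial $p$ (a polynomial function on $\X$), there would be $f\in\H_\sigma$ with $f|_\X=p|_\X$, and analytic continuation would force $f\equiv p$ on all of $\Real^{n_x}$; in particular the globally defined polynomial $p$ would lie in $\H_\sigma$. This is the only step that uses the hypothesis ${\rm int}(\X)\neq\emptyset$.

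It then remains to rule out $p\in\H_\sigma$ for a nonzero polynomial $p$. For this I would show $\H_\sigma\subseteq C_0(\Real^{n_x})$: finite linear combinations $\sum_i c_i\kappa^V(\cdot,x_i)$ vanish at infinity (as $\kappa^V(z,y)\to0$ when $\norm{z}\to\infty$) and are dense in $\H_\sigma$, while the reproducing inequality $\abs{f(x)}\le\norm{f}_{\H_\sigma}\sqrt{\kappa^V(x,x)}$ together with the fact that $\kappa^V(x,x)$ equals a positive constant $c_\sigma^2$ yields $\norm{f}_\infty\le c_\sigma\norm{f}_{\H_\sigma}$; thus convergence in $\H_\sigma$ implies uniform convergence, and $C_0(\Real^{n_x})$ is closed under uniform limits. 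A nonzero polynomial is either unbounded on $\Real^{n_x}$ (if its degree is positive) or a nonzero constant, and in neither case does it vanish at infinity; hence no nonzero polynomial lies in $\H_\sigma$, which together with the reduction above proves the lemma. (For the nonzero-constant case alone one could instead argue directly: if $c\neq0$ and the constant function $c\,\mathbf{1}$ lay in $\H_\sigma$, choose points $x_1,x_2,\dots$ with pairwise distances tending to infinity, set $f_N:=\tfrac1N\sum_{i=1}^N\kappa^V(\cdot,x_i)$, note that $\norm{f_N}_{\H_\sigma}^2\to0$ because the off-diagonal kernel entries decay, and observe $\innerprod{c\,\mathbf{1}}{f_N}_{\H_\sigma}=c$, contradicting Cauchy--Schwarz.)

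The restriction property and the $C_0$-embedding are routine. I expect the step requiring the most care --- and the only place the topological assumption on $\X$ enters --- to be the real-analyticity of functions in the Gaussian RKHS and the accompanying analytic-continuation argument that upgrades ``polynomial on $\X$'' to ``polynomial on $\Real^{n_x}$''; everything after that is a short estimate on growth at infinity.
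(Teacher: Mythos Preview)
The paper does not supply its own proof of this lemma; it is quoted from \cite[Theorem~2]{minh2010some} as a background fact to be used in the proof of Theorem~1, so there is no in-paper argument against which to compare your proposal.

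On its own merits your sketch is sound. The restriction theorem together with real-analyticity of Gaussian-RKHS elements correctly reduces the question to the full-space RKHS $\H_\sigma$, and this reduction is precisely where the hypothesis ${\rm int}(\X)\neq\emptyset$ is needed. The embedding $\H_\sigma\hookrightarrow C_0(\Real^{n_x})$ via the reproducing bound $\abs{f(x)}\le\norm{f}_{\H_\sigma}\sqrt{\kappa^V(x,x)}$ (with $\kappa^V(x,x)$ constant) and density of finite kernel sums then cleanly excludes every nonzero polynomial. The only point worth making explicit is that ``polynomial on $\X$'' should be read as the restriction to $\X$ of a global polynomial; since ${\rm int}(\X)\neq\emptyset$ this representative is unique, so the analytic-continuation step comparing $f$ and $p$ on all of $\Real^{n_x}$ is well posed.
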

		\begin{proposition}[{\cite[Theorem~1]{zhou2008derivative}}]
			\label{theoderiv}
			Let $(\H,\inpro{\cdot,\cdot}_{\H})$ be the RKHS associated with a Mercer kernel $\kappa\in C^{2s}(\X\x\X),\;s\in\integer_{\geq0}$, where $\X\subset\Real^{n_x}$ is compact with nonempty interior.
			Then, $(D^{\alpha}\kappa)_x\in\H,\;\forall x\in\X,\;\alpha\in I_s$, and
			\begin{align}
				D^{\alpha}\varphi(x)=\inpro{(D^{\alpha}\kappa)_x,\varphi}_{\H},\;\forall x\in\X,\;\varphi\in\H. \label{parrepro}
			\end{align}
		\end{proposition}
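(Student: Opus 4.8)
The plan is to prove both assertions simultaneously by induction on the order $|\alpha| := \sum_{j=1}^{n_x}\alpha^j$ of the multi-index, the base case $|\alpha|=0$ being exactly the ordinary reproducing property $\varphi(x) = \inpro{\kappa(\cdot,x),\varphi}_{\H}$, which holds because $\kappa$ is the reproducing kernel of $\H$ and point evaluations are bounded. For the inductive step I would fix a coordinate direction $e_i$ and a multi-index $\gamma$ with $\gamma+e_i=\alpha$ and $|\gamma|=|\alpha|-1\le s-1$, assuming the inductive hypothesis that $(D^{\gamma}\kappa)_x\in\H$ and $D^{\gamma}\varphi(x)=\inpro{(D^{\gamma}\kappa)_x,\varphi}_{\H}$ for every $\varphi\in\H$ and $x\in\X$. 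Everything then reduces to differentiating the scalar map $x\mapsto\inpro{(D^{\gamma}\kappa)_x,\varphi}_{\H}$ one more time in the direction $e_i$.

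The heart of the argument is a norm-convergence lemma for the difference quotients $g_t := t^{-1}\bigl[(D^{\gamma}\kappa)_{x+te_i}-(D^{\gamma}\kappa)_x\bigr]\in\H$ as $t\to 0$. The key observation is that the inner products $\inpro{g_t,g_s}_{\H}$ can be written out \emph{explicitly} in terms of $\kappa$: applying the inductive reproducing property with $\varphi=(D^{\gamma}\kappa)_{x'}$ gives $\inpro{(D^{\gamma}\kappa)_x,(D^{\gamma}\kappa)_{x'}}_{\H}=D^{\gamma}[(D^{\gamma}\kappa)_{x'}](x)$, which equals the mixed derivative $\partial^{\gamma}_{(1)}\partial^{\gamma}_{(2)}\kappa(x,x')$ obtained by differentiating $\kappa$ to order $|\gamma|$ in each argument. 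Because $\kappa\in C^{2s}(\X\x\X)$ and the total order $2|\gamma|\le 2s-2$, this function lies in $C^{2}(\X\x\X)$, so $\inpro{g_t,g_s}_{\H}$ is a genuine second mixed difference quotient of a $C^2$ function. Taylor's theorem then shows that $\inpro{g_t,g_s}_{\H}$ tends, as $s,t\to 0$, to the common value $\partial^{\alpha}_{(1)}\partial^{\alpha}_{(2)}\kappa(x,x)$ independently of how $s,t$ approach zero; hence $\norm{g_t-g_s}_{\H}^2=\norm{g_t}_{\H}^2+\norm{g_s}_{\H}^2-2\inpro{g_t,g_s}_{\H}\to 0$, and by completeness of $\H$ the net $(g_t)$ converges in $\H$ to some limit $G$. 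I would identify $G$ through the boundedness of point evaluations: $g_t(y)=\inpro{g_t,\kappa(\cdot,y)}_{\H}\to\inpro{G,\kappa(\cdot,y)}_{\H}=G(y)$, while directly $g_t(y)=t^{-1}[\partial^{\gamma}_{(2)}\kappa(y,x+te_i)-\partial^{\gamma}_{(2)}\kappa(y,x)]\to\partial^{\alpha}_{(2)}\kappa(y,x)=(D^{\alpha}\kappa)_x(y)$, so $G=(D^{\alpha}\kappa)_x\in\H$, which establishes the first claim for $\alpha$.

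With norm convergence secured, the reproducing identity for $\alpha$ follows from continuity of the inner product: for any $\varphi\in\H$, $\inpro{(D^{\alpha}\kappa)_x,\varphi}_{\H}=\lim_{t\to0}\inpro{g_t,\varphi}_{\H}=\lim_{t\to0}t^{-1}[D^{\gamma}\varphi(x+te_i)-D^{\gamma}\varphi(x)]$, where the last step again invokes the inductive reproducing property. Since this limit exists, $D^{\gamma}\varphi$ is partially differentiable in the $e_i$ direction at $x$ with $D^{\alpha}\varphi(x)=\inpro{(D^{\alpha}\kappa)_x,\varphi}_{\H}$; continuity of $x\mapsto(D^{\alpha}\kappa)_x$ in $\H$-norm, which follows from continuity of $\partial^{\alpha}_{(1)}\partial^{\alpha}_{(2)}\kappa$, then makes $D^{\alpha}\varphi$ continuous, so $\varphi\in C^{|\alpha|}$ and Clairaut's theorem guarantees the resulting mixed partial is independent of the chosen decomposition $\alpha=\gamma+e_i$. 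This closes the induction.

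The main obstacle is the norm-convergence lemma. Pointwise and weak convergence of the difference quotients are easy, but to pass the limit through $\inpro{\cdot,\varphi}_{\H}$ one genuinely needs \emph{strong} convergence in $\H$, and obtaining it forces the precise bookkeeping that each differentiation consumes two orders of smoothness of $\kappa$ (one per argument of the kernel) — which is exactly why the hypothesis must be $C^{2s}$ rather than $C^{s}$, and why the argument degrades gracefully as $|\gamma|$ climbs toward $s$.
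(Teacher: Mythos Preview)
The paper does not supply its own proof of this proposition: it is quoted from \cite{zhou2008derivative} as an external tool (Appendix~A, ``Tools to prove Theorem~1'') and used as a black box to establish that $K(\cdot,x)\in\H_V$ and that $U$ is linear. Your argument is correct and is essentially the original proof in \cite{zhou2008derivative}: induction on $|\alpha|$, with the inductive step carried by showing that the difference quotients $g_t$ form a Cauchy net in $\H$ because $\norm{g_t-g_s}_{\H}^2$ can be rewritten as a second mixed difference quotient of the $C^2$ function $\partial^{\gamma}_{(1)}\partial^{\gamma}_{(2)}\kappa$, and then identifying the strong limit pointwise as $(D^{\alpha}\kappa)_x$. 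One small point worth making explicit is the treatment of boundary points $x\in\X\setminus{\rm int}(\X)$: the difference-quotient argument is only literally valid for $x\in{\rm int}(\X)$, and the conclusion extends to all of $\X$ via the $\H$-norm continuity of $x\mapsto(D^{\alpha}\kappa)_x$ together with the paper's Definition~1, which defines $D^{\alpha}\varphi$ on $\partial\X$ as the continuous extension from the interior.
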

		\paragraph {Dynkin's formula}
		Under Assumption 1, we obtain
		Dynkin's formula (cf. {\cite[Theorem~7.3.3,~Theorem~7.4.1]{oksendal2003stochastic}}):
		\begin{align}
			E_{x}\left[\Psi(x(t_1))\right]-\Psi(x)=-E_{x}\left[\int_{0}^{t_1}\mathcal{G}(\Psi)(x(q))dq\right],\;\;\forall t_1\in[0,\infty), \label{dynkins}
		\end{align}
		for any $x\in\Real^{n_x}$ and for any $\Psi\in C_0^2(\Real^{n_x})$, i.e., $\Psi\in C^2(\Real^{n_x})$ and $\Psi$ has compact support, where $\mathcal{G}$ is defined in (5).
		Moreover, it holds {\cite[Chapter~III.3]{fleming2006controlled}}, for $\beta>0$, that
		\begin{align}
			e^{-\beta t_1}E_{x}\left[\Psi(x(t_1))\right]-\Psi(x)=-E_{x}\left[\int_{0}^{t_1}e^{-\beta q}\left[\beta I_{\rm op}+\mathcal{G}\right](\Psi)(x(q))dq\right],\;\;\forall t_1\in[0,\infty). \label{dynkins2}
		\end{align}
		When $t_1$ is the first exit time of a bounded set\footnote{The first exit time $t_1$ of a bounded set ${\rm int}(\X)$ is given by $t_1:=\inf\{q\mid x(q)\neq {\rm int}(\X)\}$ starting from a point $x(0)=x\in{\rm int}(\X)$.}, then the condition for $\Psi$ is weakened into $\Psi\in C^2$ over the bounded set (see the remark of  {\cite[Theorem~7.4.1]{oksendal2003stochastic}}).
		See \reffig{fig:illustdynkin} for an intuition of the expectation $E_x$ taken for the trajectories of the state starting from $x$.
		
		\section{Proof of Theorem 1}
		\label{proof}
		Note first that the operator $U$ is well defined because $D^{\alpha}\varphi^V$ exists for any $\varphi^V\in\H_V$ from Proposition \ref{theoderiv}.
		We show that $U$ is bijective linear, and then show that the reproducing kernel in $\H_{R}$ is given by (9).
		\paragraph{Proof of (a)}
		Because the operator $U$ is surjective by definition of $\H_R$, we show that $U$ is injective.
		The operator $U$ is linear because the operator $D^{\alpha}$ is linear by \refeq{parrepro} in Proposition \ref{theoderiv}:
		\begin{align}
			D^{\alpha}(\nu_1\varphi_1^V+&\nu_2\varphi_2^V)(x)=\inpro{(D^{\alpha}\kappa)_x,\nu_1\varphi_1^V+\nu_2\varphi_2^V}_{\H_V}\nn\\
			=&\nu_1\inpro{(D^{\alpha}\kappa)_x,\varphi_1^V}_{\H_V}+\nu_2\inpro{(D^{\alpha}\kappa)_x,\varphi_2^V}_{\H_V}\nn\\
			=&\left[\nu_1D^{\alpha}\varphi_1^V+\nu_2D^{\alpha}\varphi_2^V\right](x),\;\forall x\in\X,\;\forall \nu_i\in\Real,\;\forall \varphi_i\in\H_V,\;i\in\{1,2\}. 
		\end{align}
		Hence, it is sufficient to show that ${\rm ker}(U)=0$ \cite{linearalgebra}.
		Suppose that $U(\varphi^V)(x)=0,\;\forall x\in\X$.
		It follows that $\left[\beta I_{\rm op}+\mathcal{G}\right](\varphi^V)(x)=0,\;\forall x\in{\rm int}(\X)$, where
		$\mathcal{G}$ is defined in (6).  
		Under Assumptions 1 and 2 (i.e., compactness of $\X$),
		Dynkin's fomula \refeq{dynkins} and \refeq{dynkins2} can be applied to $\varphi^V$ over the bounded set ${\rm int}(\X)$ because $\varphi^V|_{{\rm int}(\X)}\in C^2({\rm int}(\X))$.
		(i) When the discount factor $\beta>0$, we apply \refeq{dynkins2} to $\varphi^V$.
		Under Assumption 1, we can consider the time being taken to infinity.
		Because $\left[\beta I_{\rm op}+\mathcal{G}\right](\varphi^V)(x)=0,\;\forall x\in{\rm int}(\X)$, we obtain 
		\begin{align}
			\varphi^V(x)=&\displaystyle\lim_{t_1\rightarrow\infty}E_x\left[\int_0^{t_1}e^{-\beta q}\cdot0dq\right]+\displaystyle\lim_{t_1\rightarrow\infty}e^{-\beta t_1}E_x\left[\varphi^V(x(t_1))\right]\nn\\
			=&\displaystyle\lim_{t_1\rightarrow\infty}e^{-\beta t_1}E_x\left[\varphi^V(x(t_1))\right],\;\forall x\in{\rm int}(\X).
		\end{align}
		Under Assumption 2 (i.e., compactness of $\X$ and invariance of ${\rm int}(\X)$), $\displaystyle\lim_{t_1\rightarrow\infty}E_x\left[\varphi^V(x(t_1))\right]$ is bounded, from which it follows that
		$\varphi^V(x)=0$ over ${\rm int}(\X)$.
		(ii) When $\beta=0$ and $x_{t\rightarrow\infty}$ is stochastically asymptotically stable over ${\rm int}(\X)$,
		we apply \refeq{dynkins} to $\varphi^V$.
		Because $\mathcal{G}(\varphi^V)(x)=0,\;\forall x\in{\rm int}(\X)$, we obtain
		\begin{align} 
			\varphi^V(x)=\displaystyle\lim_{t_1\rightarrow\infty}E_x\left[\varphi^V(x(t_1))\right]=\varphi^V(x_{t\rightarrow\infty}),\;\forall x\in{\rm int}(\X),
		\end{align}
		which implies that $\varphi^V$ is constant over ${\rm int}(\X)$.
		From Lemma \ref{hojo1}, however, it follows that $\varphi^V(x)=0,\;\forall x\in{\rm int}(\X)$, when $\H_V$ is a Gaussian RKHS.
		Therefore, continuity of an element of $\H_V$ implies that $\varphi^V(x)=0$ over $\X$ for both cases (i) and (ii), which
		verifies ${\rm ker}(U)=0$.
		Thus, the correspondence between $\varphi^V\in\H_V$ and $\varphi=U(\varphi^V)\in\H_R$ is one-to-one, and inner product preserves by definition (8).
		\begin{figure}[t]
			\begin{center}
				\includegraphics[clip,width=0.35\textwidth]{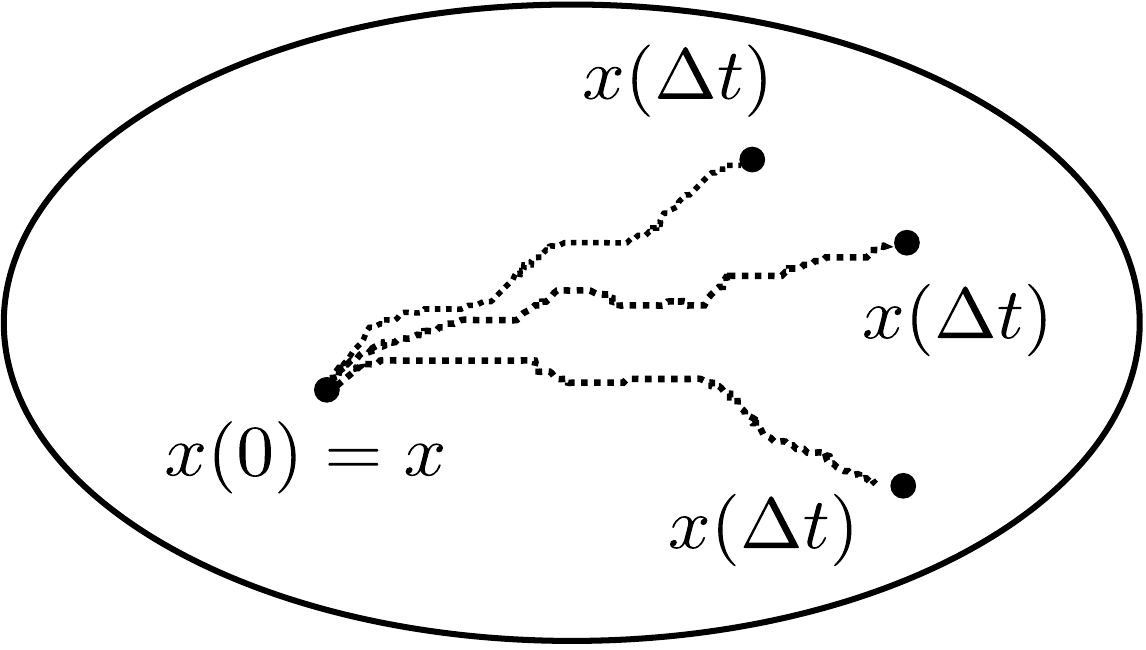}
				\caption{Given $x\in\Real^{n_x}$, the state evolves from $x(0)=x$ by following the SDE, and $x(\Delta t)$ is thus stochastic.  The expectation $E_{x}$
					is taken for all the trajectories.}
				\label{fig:illustdynkin}
			\end{center}
		\end{figure}
		\paragraph{Proof of (b)}
		We show that $\H_{R}$ is an RKHS.
		Under Assumptions 1 and 2 (i.e., compactness of $\X$), $h^{\phi}$ and $A^{\phi}$ are continuous and hence are bounded over $\X$.
		Thus, Proposition \ref{theoderiv} (i.e., $(D^{\alpha}\kappa)_x\in\H_V$) implies that $K(\cdot,x)\in\H_V$.
		Therefore, it follows that $\kappa(\cdot,x)\in\H_{R}$.
		Moreover, from \refeq{parrepro} in Proposition \ref{theoderiv}, we obtain
		\begin{align}
			U(\varphi^V)(x)=\inpro{\varphi^V,K(\cdot,x)}_{\H_V},
		\end{align}
		from which we obtain
		\begin{align}
			\inpro{\kappa(\cdot,x),\kappa(\cdot,y)}_{\H_{R}}=\inpro{K(\cdot,x),K(\cdot,y)}_{\H_V}=U(K(\cdot,y))(x)=\kappa(x,y),
		\end{align}	
		and that
		\begin{align}
			\inpro{\varphi,\kappa(\cdot,x)}_{\H_{R}}=\inpro{U^{-1}(\varphi),K(\cdot,x)}_{\H_V}=U(U^{-1}(\varphi))(x)=\varphi(x),\;\forall \varphi\in\H_{R},\forall x\in\X.
		\end{align}	
		Therefore, $\kappa(\cdot,\cdot):\X\x\X\rightarrow\Real$ is the reproducing kernel with which the RKHS $\H_{R}$ is associated.
		\section{Barrier-certified policy update}
		As illustrated in Figure 2, the space of the allowable policies is given by $\Gamma$ instead of $\Phi$ to
		implicitly enforce state constraints.  Therefore, the greedy policy update is conducted assuming
		that $\Gamma$ is the whole policy space.  We assume that $\eta$ and hence $A$ is independent on $u$, $h(x,u)=f(x)+g(x)u$, and the immediate cost $R(x,u)$ is given by
		$Q(x)+\frac{1}{2}u^{\T}Mu$ ($\eta$ was assumed to be $0$ in the main text for simplicity of barrier certificates).  Given the current policy $\phi$, from (4), the greedy policy update at state $x\in{\rm int}(X)$ is given by
		\begin{align}
			\phi^+(x)&=\displaystyle\argmin_{u\in\mathcal{S}(x)}\left[\frac{1}{2}{\rm tr}\left[\frac{\partial^2 V^{\phi}(x)}{\partial x^2}A(x,u)\right]+
			\frac{\partial V^{\phi}(x)}{\partial x}h(x,u)+R(x,u)\right], \\
			&=\displaystyle\argmin_{u\in\mathcal{S}(x)}\left[\frac{1}{2}u^{\T}Mu+{\frac{\partial V^{\phi}(x)}{\partial x}}g(x)u\right].
		\end{align}
		The simplicity of this optimization problem comes from the CT formulation of the VF and barrier certificates.
		\section{Lipschitz continuity of barrier-certified policies}
		Proposition 1 is based on the following theorem.
		\begin{theorem}[{\cite[Theorem~1]{morris2013sufficient}}]
			\label{QPsuf}
			Consider the QP:
			\begin{align}
				u^{*}(x)=&\displaystyle\argmin_{u\in\Real^{n_u}}\;u^{\T}H(x)u+2v(x)^{\T}u \label{LIPQP}\\
				{\rm s.t.}&\;\; A_{(\leq)}(x)u\leq a_{(\leq)}(x)\nn \\
				&\;\; A_{(=)}(x)u=a_{(=)}(x),\nn
			\end{align}	
			where $H,\;v,\;A_{(\leq)},\;A_{(=)},\;a_{(\leq)}$, and $a_{(=)}$ are continuous functions,
			and define the width of a feasible set as the unique solution to the following linear program:
			\begin{align}
				u_w(x) = &\displaystyle\max_{[u^{\T},u_w]^{\T}\in\Real^{n_u+1}} u_w\\
				{\rm s.t.}&\;\; A_{(\leq)}(x)u+[u_w,u_w,\ldots,u_w]^{\T}\leq a_{(\leq)}(x) \nn\\
				&\;\; A_{(=)}(x)u=a_{(=)}(x).\nn
			\end{align}	
			Suppose that the following conditions hold at a point $x^{*}\in\X$:
			\begin{enumerate}
				\item $u_w(x^{*})>0$
				\item $A_{(=)}(x)$ has full row rank
				\item $A_{(\leq)}(x),\;A_{(=)}(x),\;a_{(\leq)}(x),$ and $a_{(=)}(x)$ are Lipschitz continuous at $x^{*}$
				\item $H(x^{*})=H^{\T}(x^{*})$ and is positive definite
				\item $H(x)$ and $v(x)$ are Lipschitz continuous at $x^{*}$
			\end{enumerate}	
			Then the feedback $u^{*}(x)$ defined in \refeq{LIPQP} is unique and Lipschitz continuous with respect to the state at $x^{*}$.
		\end{theorem}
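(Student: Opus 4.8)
The plan is to prove uniqueness first, then establish Lipschitz continuity by combining strong convexity with stability of the feasible polyhedron, working from the primal problem and its KKT system. Throughout, write $f_x(u):=u^{\T}H(x)u+2v(x)^{\T}u$ and let $\Omega(x):=\{u\mid A_{(\leq)}(x)u\leq a_{(\leq)}(x),\;A_{(=)}(x)u=a_{(=)}(x)\}$ denote the feasible set.

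\textbf{Step 1 (uniqueness and local boundedness).} Since $H(x^{*})$ is symmetric positive definite, continuity of $H$ and of its smallest eigenvalue yields a neighborhood $\mathcal{N}$ of $x^{*}$ and a constant $\mu>0$ with $H(x)\succeq\mu I$ for all $x\in\mathcal{N}$; hence $f_x$ is $\mu$-strongly convex there. A strongly convex function has at most one minimizer over a convex set, so $u^{*}(x)$ is unique whenever $\Omega(x)\neq\emptyset$, which proves the uniqueness claim. Strong convexity together with local continuity of the data also confines $u^{*}(x)$ to a bounded set $\{\norm{u}\leq B\}$ on a possibly smaller neighborhood, a fact I will use to bound gradients below.

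\textbf{Step 2 (Lipschitz stability of the feasible set).} Condition~1, $u_w(x^{*})>0$, is a quantitative Slater condition: there is a point satisfying the equalities exactly and every inequality with uniform slack $u_w(x^{*})$. Together with the full-row-rank property of $A_{(=)}$ (Condition~2) and Lipschitz continuity of the constraint data (Condition~3), I would invoke Hoffman's error bound for polyhedra to show that on a neighborhood of $x^{*}$ the map $x\mapsto\Omega(x)$ is nonempty and Lipschitz in the Hausdorff sense: there is $L_{\Omega}$ with $\mathrm{dist}(u,\Omega(x_2))\leq L_{\Omega}\norm{x_1-x_2}$ for every $u\in\Omega(x_1)$, and symmetrically. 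Positivity of the width is exactly what keeps the Hoffman constant uniform near $x^{*}$, so the feasible set neither collapses nor jumps.

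\textbf{Step 3 (Lipschitz continuity of the minimizer).} A purely primal attempt compares the variational inequalities $\langle\nabla f_{x_i}(u_i),v-u_i\rangle\geq0$ for $v\in\Omega(x_i)$, $i\in\{1,2\}$, inserting feasible points supplied by Step~2; using $\mu$-strong monotonicity of $\nabla f_{x_1}$ and Lipschitz continuity of $H,v$ (Condition~5) this yields $\mu\norm{u_1-u_2}^2\leq C_1\norm{x_1-x_2}\,\norm{u_1-u_2}+C_2\norm{x_1-x_2}$. \textbf{This is the main obstacle:} the term $C_2\norm{x_1-x_2}$, which arises from pairing the generally nonvanishing objective gradient at the optimum with the $O(\norm{x_1-x_2})$ displacement of the feasible set, only delivers Hölder-$\tfrac12$ rather than genuine Lipschitz continuity. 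To recover the sharp rate I would pass to the KKT system, viewed as the parametric generalized equation $0\in\nabla f_x(u)+N_{\Omega(x)}(u)$ with $N_{\Omega(x)}$ the normal cone. The quantitative Slater condition is a Mangasarian--Fromovitz condition, so by Gauvin's theorem the multiplier set is nonempty and bounded; Robinson's upper-Lipschitz stability of polyhedral multifunctions (equivalently the calmness and metric-regularity results of Klatte--Kummer and Dontchev--Rockafellar), combined with the uniform strong convexity of Step~1 forcing single-valuedness of the primal solution, upgrades the estimate to $\norm{u^{*}(x_1)-u^{*}(x_2)}\leq L\norm{x_1-x_2}$. Together with uniqueness from Step~1, this establishes that $u^{*}(x)$ is unique and Lipschitz continuous at $x^{*}$.
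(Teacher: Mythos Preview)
The paper does not prove this theorem: it is quoted from Morris, Powell, and Ames (CDC~2013) and used as a black-box tool in Appendix~D to establish Proposition~1. There is therefore no in-paper argument to compare your proposal against.

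On its own merits, your outline follows the standard parametric-QP route and is broadly sound. Step~1 (uniqueness and local uniform strong convexity) and Step~2 (Hoffman-type stability of the feasible polyhedron under the quantitative Slater condition) are correct. In Step~3 you rightly flag that the primal variational-inequality comparison only yields a H\"older-$\tfrac12$ estimate, and your proposed repair via the KKT generalized equation, Gauvin's bounded-multiplier result under MFCQ, and Robinson's polyhedral stability is the right toolkit. One point worth tightening: Robinson's theorem together with single-valuedness gives \emph{calmness} of $u^{*}$ at $x^{*}$, i.e.\ $\norm{u^{*}(x)-u^{*}(x^{*})}\leq L\norm{x-x^{*}}$ for $x$ near $x^{*}$, which is precisely the ``Lipschitz at $x^{*}$'' conclusion the theorem asserts and the paper later aggregates over the compact $\X$ via Fact~D.2. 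Upgrading to a two-sided neighborhood Lipschitz estimate would require the piecewise-affine structure of the QP solution map (Hager~1979, or strong-regularity arguments in Dontchev--Rockafellar), which you invoke only implicitly; but for the statement as phrased your calmness argument already suffices.
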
	
		We also use the following facts to prove Proposition 1.
		\begin{fact}
			\label{fact1}
			The product of two Lipschitz continuous functions over a bounded set $\X$ is also Lipschitz continuous over $\X$.
		\end{fact}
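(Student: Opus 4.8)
The plan is to reduce everything to the elementary bilinear estimate for products, after first recording that a Lipschitz function on a bounded set is automatically bounded. So let $f_1,f_2:\X\to\Real$ be Lipschitz continuous on $\X$ with constants $L_1$ and $L_2$, respectively (the case $\X=\emptyset$ being vacuous). Since $\X$ is bounded, the diameter $\delta:=\sup_{x,y\in\X}\norm{x-y}_{\real^{n_x}}$ is finite; fix any reference point $x_0\in\X$. For every $x\in\X$ and each $i\in\{1,2\}$ the Lipschitz bound gives $\abs{f_i(x)}\le\abs{f_i(x_0)}+L_i\norm{x-x_0}_{\real^{n_x}}\le\abs{f_i(x_0)}+L_i\delta=:M_i<\infty$, so each $f_i$ is bounded on $\X$ by $M_i$.

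Next I would carry out the standard add-and-subtract argument. For any $x,y\in\X$,
\begin{align}
\abs{f_1(x)f_2(x)-f_1(y)f_2(y)}
&\le\abs{f_1(x)}\,\abs{f_2(x)-f_2(y)}+\abs{f_2(y)}\,\abs{f_1(x)-f_1(y)}\nn\\
&\le M_1L_2\norm{x-y}_{\real^{n_x}}+M_2L_1\norm{x-y}_{\real^{n_x}}
=(M_1L_2+M_2L_1)\norm{x-y}_{\real^{n_x}},\nn
\end{align}
so $f_1f_2$ is Lipschitz continuous on $\X$ with constant $M_1L_2+M_2L_1$, as claimed.

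There is essentially no obstacle here: the argument is just bilinearity of multiplication combined with the triangle inequality. The only place the hypothesis is genuinely used is the boundedness step, where boundedness (in particular compactness) of $\X$ forces each factor to be bounded; without it the statement fails, as $f_1(x)=f_2(x)=x$ on all of $\Real$ shows.
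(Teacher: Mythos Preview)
Your proof is correct and is the standard argument. The paper itself states this as a \emph{Fact} without supplying any proof, so there is nothing substantive to compare against; your boundedness-plus-bilinear-estimate approach is exactly what one would expect here.
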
	
		\begin{fact}
			\label{fact2}
			Given a compact set $\X$, a function which is Lipschitz continuous at any point $x\in\X$ is Lipschitz continuous over $\X$.
		\end{fact}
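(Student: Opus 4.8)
The statement is the classical fact that local Lipschitz continuity on a compact set upgrades to a single global Lipschitz bound. I read ``Lipschitz continuous at $x\in\X$'' as: there exist an open neighborhood $U_x$ of $x$ and a constant $L_x\geq 0$ such that $\abs{f(a)-f(b)}\leq L_x\norm{a-b}$ for all $a,b\in U_x\cap\X$. The plan is a covering argument combined with the Lebesgue number lemma, treating nearby and far-apart pairs of points separately, with a global boundedness estimate handling the latter.

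First I would record two preliminary observations. Local Lipschitz continuity at every point implies that $f$ is continuous on $\X$; since $\X$ is compact, $f$ attains a finite bound, so $M:=\sup_{x\in\X}\abs{f(x)}<\infty$. Next, the neighborhoods $\{U_x\}_{x\in\X}$ form an open cover of the compact set $\X$, so a finite subcover $U_{x_1},\dots,U_{x_N}$ exists; I set $L_{\max}:=\max_{1\le i\le N}L_{x_i}$, a single finite constant valid on each covering piece.

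Then I would invoke the Lebesgue number lemma for this finite cover: there is $\delta>0$ such that every subset of $\X$ of diameter less than $\delta$ lies entirely inside some $U_{x_i}$. Given arbitrary $x,y\in\X$, I split into two cases. If $\norm{x-y}<\delta$, then $\{x,y\}\subset U_{x_i}$ for some $i$, whence $\abs{f(x)-f(y)}\le L_{x_i}\norm{x-y}\le L_{\max}\norm{x-y}$. If $\norm{x-y}\ge\delta$, I use the global bound $\abs{f(x)-f(y)}\le 2M\le \tfrac{2M}{\delta}\norm{x-y}$. Taking $L:=\max\{L_{\max},\,2M/\delta\}$ then yields $\abs{f(x)-f(y)}\le L\norm{x-y}$ for all $x,y\in\X$, which is exactly Lipschitz continuity over $\X$.

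The delicate point---and the only place where compactness does more than merely produce a finite constant---is the passage from the local estimates to a single global one: two points lying in different neighborhoods share no common local constant, so the neighborhood bounds cannot be chained directly. The Lebesgue number lemma is precisely what supplies a uniform scale $\delta$ below which every pair of points is captured by one $U_{x_i}$, and the remaining far-apart pairs carry no small denominator, so they are controlled crudely through the boundedness of $f$. I would note in passing that this is why no connectedness assumption on $\X$ is required.
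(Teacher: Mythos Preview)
Your argument is correct and is the standard compactness-plus-Lebesgue-number proof of this fact. The paper itself states Fact~\ref{fact2} without proof, treating it as a known elementary result used in the proof of Proposition~1, so there is nothing to compare your approach against; your write-up would serve perfectly well as a supplied justification.
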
	
		\begin{fact}
			\label{fact3}
			Suppose that $b:\X\rightarrow\Real$ is Lipschitz continuous over a set $\X$ and that $\alpha$ is Lipschitz continuous over $\Real$.  Then, the composite function $\alpha\circ b$ is Lipschitz continuous over $\X$.
		\end{fact}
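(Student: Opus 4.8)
The plan is to chase the definitions directly: the composition of two Lipschitz maps is Lipschitz, with a constant equal to the product of the two constants. First I would fix a Lipschitz constant $L_{\alpha}\geq0$ for $\alpha$ on $\Real$, so that $|\alpha(s)-\alpha(t)|\leq L_{\alpha}|s-t|$ for all $s,t\in\Real$, and a Lipschitz constant $L_{b}\geq0$ for $b$ on $\X$, so that $|b(x)-b(y)|\leq L_{b}\norm{x-y}$ for all $x,y\in\X$.

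Then, given arbitrary $x,y\in\X$, I would set $s:=b(x)$ and $t:=b(y)$, which are well-defined real numbers since $b$ maps into $\Real$, and apply the two bounds in turn:
\begin{align}
\abs{\alpha(b(x))-\alpha(b(y))}\leq L_{\alpha}\abs{b(x)-b(y)}\leq L_{\alpha}L_{b}\norm{x-y}.
\end{align}
Since $x$ and $y$ were arbitrary, this shows that $\alpha\circ b$ is Lipschitz continuous over $\X$ with constant $L_{\alpha}L_{b}$, which is exactly the claim; in particular $\X$ need not even be bounded for this argument.

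There is no genuine obstacle here. The only point worth noting is that $\alpha$ is assumed Lipschitz on all of $\Real$, so the first inequality holds regardless of where the image $b(\X)$ sits; if one only had local Lipschitz continuity of $\alpha$, one would additionally invoke compactness of $\X$ (Assumption~\ref{assump2}) together with continuity of $b$ to confine $b(\X)$ to a compact interval on which a single Lipschitz constant for $\alpha$ is available, and then combine with Fact~\ref{fact2}. That refinement is not needed under the stated hypotheses.
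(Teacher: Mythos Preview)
Your proof is correct and is the standard argument; the paper itself states this as a fact without proof, so there is nothing further to compare.
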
	
		We now prove Proposition 1.
		For the barrier-certified policy update (11), inequality constraints represent the affine constraints $\U$ and the barrier certificates, and there are no equality constraints.
		It is, however, possible to augment $u\in \Real^{n_u}$ and consider the QP:
		\begin{align}
			u^{*}_{\rm aug}(x)=&\displaystyle\argmin_{u_{\rm aug}\in\Real^{n_u+1}}u_{\rm aug}^{\T}
			\left[
			\begin{array}{cc}
				H(x)& 0\\ 0 &1
			\end{array}\right]
			u_{\rm aug}+2[v(x)^{\T},1]u_{\rm aug} \label{LIPQP2}\\
			{\rm s.t.}&\;\; \left[A_{(\leq)}(x),\signal{0}\right]u_{\rm aug}\leq [a^{\T}_{(\leq)}(x),1]^{\T}\nn \\
			&\;\; \left[0,0,\ldots,1\right]u_{\rm aug}=0.\nn
		\end{align}	
		Therefore, we can ignore the condition that $A_{(=)}$ has full row rank if there are no equality constraints.
		Because $f$, $g$, $\alpha$, and the derivative of $b$ are Lipschitz continuous over the compact set $\X$, Fact \ref{fact1} and Fact \ref{fact3} imply that
		$\frac{\partial b(x)}{\partial x}g(x)$ and $\frac{\partial b(x)}{\partial x}f(x)+\alpha(b(x))$ are Lipschitz continuous over $\X$.
		Therefore, $A_{\leq}(x)$ and $a_{\leq}(x)$ are Lipschitz continuous over $\X$.
		
		Moreover, because the function $V^{\phi}$ is in the RKHS $\H_V$ associated with the reproducing kernel $\kappa^V(\cdot,\cdot)\in C^{2\x2}(\X\x\X)$, $\frac{\partial V^{\phi}(x)}{\partial x}$ is Lipschitz continuous.
		Therefore, Lipschitz continuity of $g(x)$ and Fact \ref{fact1} imply that $\frac{\partial V^{\phi}(x)}{\partial x}g(x)$ is Lipschitz continuous over the compact set $\X$.
		
		Lastly, $M$ is positive definite and constant over $\X$.
		
		From Theorem \ref{QPsuf} and Fact \ref{fact2}, the policy $\phi^+(x)$ defined in (11) is Lipschitz continuous over $\X$
		if the width of a feasible set $u_w(x)$ is strictly larger than zero at any point in $\X$.
		
		\section{DT case and its relation to the existing approaches}
		When the proposed framework is applied to model-based DT-VF approximation in RKHSs, it reproduces some of the existing methods.
		The Bellman equation of a policy $\phi$ for a DT-VF ${\tilde{V}^{\phi}}$ is given by
		\begin{align}
			{\tilde{V}^{\phi}}(x_n)-\gamma \int_{\X}{\tilde{V}^{\phi}}(x_{+})p^{\phi}(x_{+}|x_n)dx_{+}=\tilde{R}^{\phi}(x_n):=\tilde{R}(x_n,\phi(x_n)), \label{belldis}
		\end{align}	
		where $\gamma\in[0,1)$ is the DT discount factor, $x_n$ is the state observed at time instant $n\in\integer_{\geq0}$, $\tilde{R}:\Real^{n_x}\x\U\rightarrow\Real$ is the average immediate cost function at each time instant, and $p^{\phi}(x_{+}|x)$
		is the probability that, given a policy $\phi$, the successor state is $x_{+}$ conditioned on the current state $x$.
		\begin{lemma}[{\cite[page~35~-~Corollary~4]{moor1}}]
			\label{lemmaap}
			Let $\H$ be a Hilbert space associated with the reproducing kernel $\ka{\cdot,\cdot}:\X\x\X\rightarrow\Real$.  If $\X\subset\Real^{n_x}$ is compact and
			$\ka{\cdot,x}$ is continuous for any given $x\in\X$, then $\H$ is the space of continuous functions.
		\end{lemma}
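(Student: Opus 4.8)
The plan is to prove the statement by the standard feature-map argument: show that every $f\in\H$ is continuous on $\X$ by writing it, via the reproducing property, as the composition of a norm-continuous $\H$-valued map with a bounded linear functional. Recall that $\ka{\cdot,x}\in\H$ for every $x\in\X$ and $f(x)=\inpro{f,\ka{\cdot,x}}_{\H}$ for all $f\in\H$ and $x\in\X$, so that for $x,x'\in\X$
\begin{align}
\abs{f(x)-f(x')}\leq\norm{f}_{\H}\,\norm{\ka{\cdot,x}-\ka{\cdot,x'}}_{\H},\qquad\norm{\ka{\cdot,x}-\ka{\cdot,x'}}_{\H}^{2}=\ka{x,x}-2\ka{x,x'}+\ka{x',x'}.\nn
\end{align}
Thus it suffices to show that the canonical feature map $\feat:\X\to\H$, $\feat(x):=\ka{\cdot,x}$, is norm-continuous; then $f=\inpro{f,\cdot}_{\H}\circ\feat$ is continuous, i.e. $\H$ consists of continuous functions, and (by compactness of $\X$) one moreover obtains $\norm{f}_{\infty}\leq\bigl(\sup_{x\in\X}\ka{x,x}\bigr)^{1/2}\norm{f}_{\H}$, so the inclusion into the continuous functions is bounded.

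I would then reduce norm-continuity of $\feat$ to two ingredients on the compact set $\X$: (i) separate continuity of $\kappa$, that is continuity of each slice $\ka{\cdot,x}$ (the stated hypothesis), and (ii) continuity, hence boundedness on the compact $\X$, of the diagonal $x\mapsto\ka{x,x}$. Indeed, if $x_n\to x$ then (i) gives $\ka{x_n,x}\to\ka{x,x}$, and if in addition $\ka{x_n,x_n}\to\ka{x,x}$ the displayed identity yields $\norm{\feat(x_n)-\feat(x)}_{\H}\to0$. Equivalently one argues on $\X\x\X$ that a positive-definite, separately continuous kernel with bounded diagonal is jointly continuous, since by Cauchy--Schwarz $\abs{\ka{x,x'}-\ka{y,y'}}$ is controlled by $\norm{\feat(x)-\feat(y)}_{\H}$, $\norm{\feat(x')-\feat(y')}_{\H}$ and the uniform bound on $\norm{\feat(\cdot)}_{\H}$.

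The step I expect to be the genuine obstacle---and the real content of the cited Corollary~4---is securing ingredient (ii): on a compact domain, continuity of every slice does not on its own force the diagonal $x\mapsto\ka{x,x}$ to be bounded, so one must in fact use that $\kappa$ is jointly continuous (equivalently, separately continuous and bounded) on $\X\x\X$, which is exactly the setting under which the reference delivers the conclusion; I would either invoke that result directly or supply the diagonal-continuity step from the specific structure of the reproducing kernel at hand. Granting this, the remaining pieces---the Cauchy--Schwarz estimate, the inner-product expansion of $\norm{\ka{\cdot,x}-\ka{\cdot,x'}}_{\H}^{2}$, and (for the operator-norm bound on the inclusion) density of $\mathrm{span}\{\ka{\cdot,x}:x\in\X\}$ in $\H$---are routine RKHS facts requiring no new ideas.
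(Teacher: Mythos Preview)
The paper does not prove this lemma; it is quoted verbatim from the cited reference and then invoked without argument in the proof of the discrete-time analogue in Appendix~E. There is therefore no proof in the paper to compare your attempt against.

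That said, your outline is the standard feature-map argument and is essentially correct. You also put your finger on the one genuinely non-routine point: from the reproducing identity and Cauchy--Schwarz one reduces continuity of every $f\in\H$ to norm-continuity of $x\mapsto\ka{\cdot,x}$, and the expansion $\norm{\ka{\cdot,x}-\ka{\cdot,x'}}_{\H}^{2}=\ka{x,x}-2\ka{x,x'}+\ka{x',x'}$ shows this follows once one controls the diagonal $x\mapsto\ka{x,x}$. Separate continuity of $\kappa$ alone, even on a compact $\X$, does not obviously yield that control, and you are right to flag it. The usual formulation of the cited result in fact takes $\kappa$ to be continuous on $\X\x\X$ (or equivalently assumes the diagonal is locally bounded), under which the step is immediate; the paper's paraphrase is slightly loose here. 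Your proposed remedy---either read the hypothesis as joint continuity and invoke the reference, or supply the diagonal bound from the concrete kernel at hand---is exactly what is needed. The remaining ingredients you list (density of ${\rm span}\{\ka{\cdot,x}:x\in\X\}$, the $\sup$-norm estimate on compact $\X$) are indeed routine.
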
	
		\begin{theorem}
			\label{theoRKHSdis}
			Suppose that $\H_{\tilde{V}}$ is an RKHS associated with the reproducing kernel $\kappa^{\tilde{V}}(\cdot,\cdot):\X\x\X\rightarrow\Real$.
			Suppose also that $\X\subset\Real^{n_x}$ is compact, and that $\kappa^{\tilde{V}}(\cdot,x)$ is continuous for any given $x\in\X$.
			Define the operator $\tilde{U}$ as $\tilde{U}(\varphi^{\tilde{V}})(x):=\varphi^{\tilde{V}}(x)-\gamma\int_{\X}\varphi^{\tilde{V}}(x_{+})p^{\phi}(x_{+}|x)dx_{+}$ for any $\varphi^{\tilde{V}}\in\H_{\tilde{V}}$ and for any $x\in\X$, where $\gamma\in[0,1)$.
			Then, the following statements hold.\\
			(a) The space
			\begin{align}
				\H_{\tilde{R}}:=\{\varphi\mid\varphi(x)=\tilde{U}(\varphi^{\tilde{V}})(x),\;\exists\varphi^{\tilde{V}}\in\H_{\tilde{V}},\forall x\in\X\}
			\end{align}
			is an isomorphic Hilbert space of $\H_{\tilde{V}}$ equipped with the inner product defined by
			\begin{align}
				\inpro{\varphi_1,\varphi_2}_{\H_{\tilde{R}}}:=\inpro{\varphi_1^{\tilde{V}},\varphi_2^{\tilde{V}}}_{\H_{\tilde{V}}},\;\varphi_i(x):=\tilde{U}(\varphi_i^{\tilde{V}})(x),\;\forall x\in\X,\;i\in\{1,2\}. \label{inprodis}
			\end{align}
			(b) The Hilbert space $\H_{\tilde{R}}$ has the reproducing kernel given by
			\begin{align}
				\tilde{\kappa}(x,y):=\tilde{U}(\tilde{K}(\cdot,y))(x), \;x,y\in\X,  \label{DTrepro}
			\end{align}
			where
			\begin{align}
				\tilde{K}(\cdot,x)=\kappa^{\tilde{V}}(\cdot,x)-\gamma m^{\kappa}_x.
			\end{align}
			Here, $m^{\kappa}_x\in\H_{\tilde{V}}$ is the embedding satisfying
			\begin{align}
				\inpro{m^{\kappa}_x,\varphi^{\tilde{V}}}_{\H_{\tilde{V}}}=\int_{\X}\varphi^{\tilde{V}}(x_{+})p^{\phi}(x_{+}|x)dx_{+}.
			\end{align}
		\end{theorem}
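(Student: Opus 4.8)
The plan is to follow the template of the proof of Theorem \ref{theoRKHS}, but with the contraction induced by $\gamma\in[0,1)$ playing the role that Dynkin's formula and the discount $e^{-\beta t}$ played there, and with the kernel mean embedding $m^{\kappa}_x$ replacing the derivative-reproducing elements.

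For part (a), linearity of $\tilde{U}$ is immediate, since it is the difference of the identity operator and the linear integral operator $\varphi^{\tilde{V}}\mapsto\int_{\X}\varphi^{\tilde{V}}(x_+)p^{\phi}(x_+|\cdot)dx_+$, and surjectivity onto $\H_{\tilde{R}}$ holds by the very definition of $\H_{\tilde{R}}$. The substantive step is injectivity, i.e.\ ${\rm ker}(\tilde{U})=\{0\}$. Suppose $\tilde{U}(\varphi^{\tilde{V}})(x)=0$ for all $x\in\X$; then $\varphi^{\tilde{V}}$ satisfies the fixed-point identity $\varphi^{\tilde{V}}(x)=\gamma\int_{\X}\varphi^{\tilde{V}}(x_+)p^{\phi}(x_+|x)dx_+$. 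By Lemma \ref{lemmaap}, every element of $\H_{\tilde{V}}$ is continuous, hence bounded on the compact set $\X$; setting $C:=\sup_{x\in\X}\abs{\varphi^{\tilde{V}}(x)}<\infty$ and using that $p^{\phi}(\cdot|x)$ is a probability measure yields $\abs{\varphi^{\tilde{V}}(x)}\le\gamma C$ for all $x\in\X$, hence $C\le\gamma C$, which forces $C=0$ since $\gamma<1$; thus $\varphi^{\tilde{V}}\equiv 0$. Therefore $\tilde{U}$ is a linear bijection, so each $\varphi\in\H_{\tilde{R}}$ determines its preimage $\varphi^{\tilde{V}}$ uniquely, the inner product \refeq{inprodis} is well defined, and $\tilde{U}$ becomes an isometric isomorphism; in particular $\H_{\tilde{R}}$ is a Hilbert space.

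For part (b), I would first establish that $m^{\kappa}_x$ is well defined and lies in $\H_{\tilde{V}}$. For fixed $x$, the map $\varphi^{\tilde{V}}\mapsto\int_{\X}\varphi^{\tilde{V}}(x_+)p^{\phi}(x_+|x)dx_+$ is a bounded linear functional on $\H_{\tilde{V}}$: from the reproducing property $\abs{\varphi^{\tilde{V}}(x_+)}\le\norm{\varphi^{\tilde{V}}}_{\H_{\tilde{V}}}\sqrt{\kappa^{\tilde{V}}(x_+,x_+)}$, and since for each fixed $\varphi^{\tilde{V}}$ the function $x_+\mapsto\varphi^{\tilde{V}}(x_+)$ is continuous on the compact set $\X$ (Lemma \ref{lemmaap}), the family of evaluation functionals is pointwise bounded, so by the uniform boundedness principle $\sup_{x_+\in\X}\kappa^{\tilde{V}}(x_+,x_+)<\infty$; combined with $p^{\phi}(\cdot|x)$ being a probability measure this bounds the functional. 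By the Riesz representation theorem there is a unique $m^{\kappa}_x\in\H_{\tilde{V}}$ with $\inpro{m^{\kappa}_x,\varphi^{\tilde{V}}}_{\H_{\tilde{V}}}=\int_{\X}\varphi^{\tilde{V}}(x_+)p^{\phi}(x_+|x)dx_+$, which is exactly the stated defining property; hence $\tilde{K}(\cdot,x)=\kappa^{\tilde{V}}(\cdot,x)-\gamma m^{\kappa}_x\in\H_{\tilde{V}}$, and combining the reproducing property of $\H_{\tilde{V}}$ with this identity gives $\tilde{U}(\varphi^{\tilde{V}})(x)=\inpro{\varphi^{\tilde{V}},\tilde{K}(\cdot,x)}_{\H_{\tilde{V}}}$ for all $\varphi^{\tilde{V}}$ and $x$. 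It follows that $\tilde{\kappa}(\cdot,x)=\tilde{U}(\tilde{K}(\cdot,x))\in\H_{\tilde{R}}$, and for any $\varphi=\tilde{U}(\varphi^{\tilde{V}})\in\H_{\tilde{R}}$ the isometry \refeq{inprodis} yields $\inpro{\varphi,\tilde{\kappa}(\cdot,x)}_{\H_{\tilde{R}}}=\inpro{\varphi^{\tilde{V}},\tilde{K}(\cdot,x)}_{\H_{\tilde{V}}}=\tilde{U}(\varphi^{\tilde{V}})(x)=\varphi(x)$, so $\H_{\tilde{R}}$ is an RKHS with reproducing kernel $\tilde{\kappa}$; taking $\varphi=\tilde{\kappa}(\cdot,y)$ additionally gives $\tilde{\kappa}(x,y)=\inpro{\tilde{K}(\cdot,y),\tilde{K}(\cdot,x)}_{\H_{\tilde{V}}}$, confirming symmetry and positive semidefiniteness.

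I expect the main obstacle to be the two places where a little care is needed: that $p^{\phi}(\cdot|x)$ is a genuine probability kernel, which is what turns ``bounded integrand'' into ``bounded integral'' in both the contraction estimate of part (a) and the boundedness of the embedding functional in part (b); and extracting $\sup_{x\in\X}\kappa^{\tilde{V}}(x,x)<\infty$ from only the separate continuity $\kappa^{\tilde{V}}(\cdot,x)\in C(\X)$, for which the uniform boundedness principle applied to the evaluation functionals is the clean route. Everything else is a transcription of the argument for Theorem \ref{theoRKHS}, with the discounted Dynkin identities replaced by the elementary Neumann-series/contraction reasoning above.
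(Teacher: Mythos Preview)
Your proposal is correct and follows essentially the same route as the paper's proof: linearity and surjectivity by definition, injectivity via a contraction/maximum-principle argument using continuity on compact $\X$ from Lemma~\ref{lemmaap}, and the reproducing property via the identity $\tilde{U}(\varphi^{\tilde{V}})(x)=\inpro{\varphi^{\tilde{V}},\tilde{K}(\cdot,x)}_{\H_{\tilde{V}}}$. The only cosmetic differences are that the paper phrases injectivity as a max/min contradiction (if $\varphi^{\tilde{V}}(x_{\rm max})>0$ then $\varphi^{\tilde{V}}(x_{\rm max})=\gamma\int\varphi^{\tilde{V}}\,dp^{\phi}<\varphi^{\tilde{V}}(x_{\rm max})$) rather than your equivalent sup-norm inequality $C\le\gamma C$, and that the paper simply takes $m^{\kappa}_x\in\H_{\tilde{V}}$ as part of the hypothesis rather than deriving it from Riesz and uniform boundedness as you do.
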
	
		\begin{proof}
			We show that $\tilde{U}$ is bijective linear, and then show that the reproducing kernel in $\H_{\tilde{R}}$ is given by \refeq{DTrepro}.
			\paragraph{Proof of (a)}
			Because the operator $\tilde{U}$ is surjective by definition of $\H_{\tilde{R}}$, we show that $\tilde{U}$ is injective.
			The expectation operator and hence $\tilde{U}$ is linear.
			Hence, it is sufficient to show that ${\rm ker}(\tilde{U})=0$ \cite{linearalgebra}.
			Suppose that $\tilde{U}(\varphi^{\tilde{V}})(x)=0,\;\forall x\in\X$.
			It then follows that $\varphi^{\tilde{V}}(x)=\gamma\int_{\X}\varphi^{\tilde{V}}(x_{+})p^{\phi}(x_{+}|x)dx_{+}$ for all $x\in\X$.
			We show that $\varphi^{\tilde{V}}=0$ by contradiction.
			Since $\X$ is compact and any $\varphi^{\tilde{V}}\in\H_{\tilde{V}}$ is continuous from Lemma \ref{lemmaap}, $\varphi^{\tilde{V}}$ attains the maximum/minimum value at some point $x_{\rm max}$/$x_{\rm min}$, respectively.
			If $\varphi^{\tilde{V}}\neq0$, then we obtain $\varphi^{\tilde{V}}(x_{\rm max})>0$ or $\varphi^{\tilde{V}}(x_{\rm min})<0$.
			If $\varphi^{\tilde{V}}(x_{\rm max})>0$, it follows that
			\begin{align}
				\varphi^{\tilde{V}}(x_{\rm max})=\gamma\int_{\X}\varphi^{\tilde{V}}(x_{+})p^{\phi}(x_{+}|x_{\rm max})dx_{+}<\varphi^{\tilde{V}}(x_{\rm max}),
			\end{align} 
			for $\gamma\in[0,1)$, which is contradictory.
			If $\varphi^{\tilde{V}}(x_{\rm min})<0$, it follows that 
			\begin{align}
				\varphi^{\tilde{V}}(x_{\rm min})=\gamma\int_{\X}\varphi^{\tilde{V}}(x_{+})p^{\phi}(x_{+}|x_{\rm min})dx_{+}>\varphi^{\tilde{V}}(x_{\rm min}),
			\end{align}
			for $\gamma\in[0,1)$, which is also contradictory.
			Hence, $\varphi^{\tilde{V}}=0$, and ${\rm ker}(\tilde{U})=0$.
			Therefore, the correspondence between $\varphi^{\tilde{V}}\in\H_{\tilde{R}}$ and $\varphi=\tilde{U}(\varphi^{\tilde{V}})$ is one-to-one, and inner product preserves by definition \refeq{inprodis}.
			\paragraph{Proof of (b)}
			We show that $\H_{\tilde{R}}$ is an RKHS.
			Because $m^{\kappa}_x\in\H_{\tilde{V}}$ and hence $\tilde{K}(\cdot,x)\in\H_{\tilde{V}}$, it follows that $\tilde{\kappa}(\cdot,x)\in\H_{\tilde{R}}$.
			Moreover, it holds that
			\begin{align}
				\tilde{U}(\varphi^{\tilde{V}})(x)=\inpro{\varphi^{\tilde{V}},\tilde{K}(\cdot,x)}_{\H_{\tilde{V}}},
			\end{align}
			from which we obtain
			\begin{align}
				\inpro{\tilde{\kappa}(\cdot,x),\tilde{\kappa}(\cdot,y)}_{\H_{\tilde{R}}}=\inpro{\tilde{K}(\cdot,x),\tilde{K}(\cdot,y)}_{\H_{\tilde{V}}}
				=\tilde{U}(\tilde{K}(\cdot,y))(x)=\tilde{\kappa}(x,y),
			\end{align}	
			and that
			\begin{align}
				\inpro{\varphi,\tilde{\kappa}(\cdot,x)}_{\H_{\tilde{R}}}=\inpro{\tilde{U}^{-1}(\varphi),\tilde{K}(\cdot,x)}_{\H_{\tilde{V}}}=\tilde{U}(\tilde{U}^{-1}(\varphi))(x)=\varphi(x),\;\forall \varphi\in\H_{\tilde{R}},\forall x\in\X.
			\end{align}	
			Therefore, $\tilde{\kappa}(\cdot,\cdot):\X\x\X\rightarrow\Real$ is the reproducing kernel with which the RKHS $\H_{\tilde{R}}$ is associated.
		\end{proof}	
		\begin{remark}
			If $x_{+}$ is deterministically obtained, $m^{\kappa}_x=\kappa^{\tilde{V}}(\cdot,x_{+})$ for some $x_{+}\in\X$.
		\end{remark}	
		\begin{remark}
			Note that the work in \cite{ohnishi2018safety} considers model-free DT action-value function approximation
			and defines an RKHS over $\mathcal{Z}^2$, where $\mathcal{Z}:=\X\x\U$, while Theorem \ref{theoRKHSdis} is for
			model-based DT-VF approximation.
		\end{remark}
		Provided Theorem \ref{theoRKHSdis}, we show how some of the existing model-based DT-VF approximation methods can be reproduced.
		\paragraph {Online gradient descent on a sequence of Bellman loss functions}
		In \cite{sun2016online}, the residual gradient algorithm is viewed as running online gradient descent on the Bellman loss,
		and the following update equation is given:
		\begin{align}
			\hat{\tilde{V}}^{\phi}_{n+1}=\hat{\tilde{V}}^{\phi}_{n}-\lambda_ne^B_n(\nabla_{\tilde{V}}\hat{\tilde{V}}^{\phi}_{n}(x_n)-\gamma \nabla_{\tilde{V}}\hat{\tilde{V}}^{\phi}_{n}(x_{n+1})), \label{norma}
		\end{align}
		where $e^B_n:=\hat{\tilde{V}}^{\phi}_{n}(x_n)-\gamma \hat{\tilde{V}}^{\phi}_{n}(x_{n+1})-\tilde{R}^{\phi}(x_n)$, and
		$\nabla_{\tilde{V}}\tilde{V}(x)$ is the functional gradient of the
		evaluation functional $\tilde{V}(x)$ at a function $\tilde{V}$.
		Note that it is implicitly assumed that $\tilde{V}$ is in an RKHS $\H_{\tilde{V}}$ and hence $\nabla_{\tilde{V}}\tilde{V}(x)=\kappa^{\tilde{V}}(\cdot,x)$.
		If, on the other hand, we employ $\H_{\tilde{R}}$ defined in Theorem \ref{theoRKHSdis} as the stage of learning,
		and apply online gradient descent, we obtain the update rule:
		\begin{align}
			\hat{\tilde{R}}^{\phi}_{n+1}=\hat{\tilde{R}}^{\phi}_{n}-\lambda_n(\hat{\tilde{R}}^{\phi}_n(x_n)-\tilde{R}^{\phi}(x_n))\tilde{\kappa}(\cdot,x),
		\end{align}
		which results in updating $\tilde{V}^{\phi}_{n}$ by \refeq{norma} within $\H_{\tilde{V}}$.
		\paragraph {MDPs with RKHS embeddings}
		A kernelized version of MDPs has been proposed in \cite{grunewalder2012modelling}.
		Nonparametric nature of kernelized MDPs enables us to handle complicated distributions, high-dimensional data, and continuous states and actions,
		and convergence can be analyzed in the infinite sample case \cite{grunewalder2012modelling}.
		The embedding $m^{\kappa}_x$ might be learned \cite{song2013kernel} or gives information
		about the inaccuracy of a nominal model.
		The model-based VF approximation for MDPs in an RKHS can also be efficiently conducted in our framework.

		\paragraph {Gaussian process temporal difference algorithm}
		To address a probabilistic nature of MDPs, Bayesian approach is a natural option, and GPTD
		was proposed \cite{engel2005reinforcement}.
		We consider a path $(x_n)_{n=0,1,...,N}$ of the state.
		In GPTD, the posterior mean and variance of $\hat{\tilde{V}}^{\phi}$ at a point $x_{*}\in\X$ are given by
		\begin{align}
			&m^{\tilde{V}}(x_{*})={k^{\tilde{V}}_{*}}^{\T}H^{\T}(H\tilde{G}_kH^{\T}+\Sigma)^{-1}\tilde{d}_{N-1},\label{meangptd}\\
			&{\mu^{\tilde{V}}}^2(x_{*})=\kappa^{\tilde{V}}(x_{*},x_{*}) -{k^{\tilde{V}}_{*}}^{\T}H^{\T}(H\tilde{G}_kH^{\T}+\Sigma)^{-1}H{k^{\tilde{V}}_{*}}, \label{covgptd}
		\end{align}	
		where $\tilde{d}_{N-1}\sim\mathcal{N}([\tilde{R}^{\phi}(x_0),\tilde{R}^{\phi}(x_1),\ldots,\tilde{R}^{\phi}(x_{N-1})]^{\T},\Sigma)$ for some $N\in\integer_{\geq0}$,
		$k^{\tilde{V}}_{*}:=[\kappa^{\tilde{V}}(x_{*},x_0),\kappa^{\tilde{V}}(x_{*},x_1),\ldots,\kappa^{\tilde{V}}(x_{*},x_N)]^{\T}$, the $(i,j)$ entry of $\tilde{G}_k\in\Real^{(N+1)\x (N+1)}$ is $\kappa^{\tilde{V}}(x_{i-1},x_{j-1})$, $\Sigma\in\Real^{N\x N}$ is the covariance matrix of $\tilde{d}_{N-1}$, and
		\begin{align}
			H:=\left[
			\begin{array}{ccccc}
				1 & -\gamma & 0 & \ldots & 0\\
				0 & 1 & -\gamma & \ldots & 0\\
				\vdots &  &  &  & \vdots\\
				0 & 0 & \ldots & 1 & -\gamma\\
			\end{array}
			\right]\in\Real^{N\x (N+1)}.
		\end{align}
		If, on the other hand, the RKHS $\H_{\tilde{R}}$ defined in Theorem \ref{theoRKHSdis} is employed as the stage of learning, by letting
		$m^{\kappa}_{x_n}=\kappa^{\tilde{V}}(\cdot, x_{n+1})$ for $n=0,1,...,N-1$, we obtain
		\begin{align}
			&m^{\tilde{V}}(x_{*})={K^{\tilde{V}}_{*}}^{\T}(\tilde{G}+\Sigma)^{-1}\tilde{d}_N,\label{meangpd2}\\
			&{\mu^{\tilde{V}}}^2(x_{*})=\kappa^{\tilde{V}}(x_{*},x_{*}) -{K_{*}^{\tilde{V}}}^{\T}(\tilde{G}+\Sigma)^{-1}K_{*}^{\tilde{V}}, \label{covgpd2}
		\end{align}	
		where $K^{\tilde{V}}_{*}:=[\tilde{K}(x_{*},x_0),\tilde{K}(x_{*},x_1),\ldots,\tilde{K}(x_{*},x_{N-1})]^{\T}$, and the $(i,j)$ entry of $\tilde{G}\in\Real^{N\x N}$ is $\tilde{\kappa}(x_{i-1},x_{j-1})$, which result in the same values as GPTD.
		
		As we have shown, Theorem \ref{theoRKHSdis} unifies
		model-based DT-VF approximation methods working in RKHSs.
		As such, it is able to analyze tracking/convergence etc. straightforwardly by applying already established arguments
		of kernel-based methods.
		The present study enables us to conduct CT-VF approximation in RKHSs, and can also be viewed as a CT version of this result by utilizing a partial derivative reproducing property
		of certain classes of RKHSs.
		
		\section{Derivative of a Gaussian kernel}
		The function $a(x,y)$ appearing in Section 6 is given by
		\begin{align}
			&a(x,y)=\beta^2-\beta\sum_{i=1}^{n_x}a_1^i(x,y)\{{h^{\phi}}^i(y)-{h^{\phi}}^i(x)\}-\frac{\beta}{2}\sum_{i=1}^{n_x}a_2^i(x,y)\{A^{\phi}_{i,i}(y)+A^{\phi}_{i,i}(x)\}\nn\\
			&+\sum_{i,j=1}^{n_x}a_{1,1}^{j,i}(x,y){h^{\phi}}^i(y){h^{\phi}}^j(x)+\frac{1}{2}\sum_{i,j=1}^{n_x}a_{1,2}^{j,i}(x,y)\{A^{\phi}_{i,i}(y){h^{\phi}}^j(x)-A^{\phi}_{j,j}(x){h^{\phi}}^i(y)\}\nn\\
			&+\frac{1}{4}\sum_{i,j=1}^{n_x}a_{2,2}^{j,i}(x,y)A^{\phi}_{i,i}(y)A^{\phi}_{j,j}(x),
		\end{align}
		where $a_1^i,\;a_2^i,\;a_{1,1}^{j,i},\;a_{1,2}^{j,i}$, and $a_{2,2}^{j,i}$ are defined as
		\begin{align}
			(D^{e_i}\kappa^V)_y(x)=a_1^i(x,y)\kappa^V(x,y):=\frac{-(y^i-x^i)}{\sigma^2}\kappa^V(x,y),
		\end{align}
		\begin{align}
			(D^{2e_i}\kappa^V)_y(x)=a_2^i(x,y)\kappa^V(x,y):=\frac{(y^i-x^i)^2-\sigma^2}{\sigma^4}\kappa^V(x,y),
		\end{align}
		\begin{align}
			D^{e_j}(D^{e_i}\kappa^V)_y(x)=\begin{cases}
				a_{1,1}^{j,i}(x,y)\kappa^V(x,y):=\frac{(y^i-x^i)(x^j-y^j)}{\sigma^4}\kappa^V(x,y),\hfill i\neq j, \\
				a_{1,1}^{i,i}(x,y)\kappa^V(x,y):=\frac{(y^i-x^i)(x^i-y^i)+\sigma^2}{\sigma^4}\kappa^V(x,y),\hfill i= j,
			\end{cases}
		\end{align}
		\begin{align}
			D^{e_j}(D^{2e_i}\kappa^V)_y(x)=\begin{cases}
				a_{1,2}^{j,i}(x,y)\kappa^V(x,y):=\frac{-\{(y^i-x^i)^2-\sigma^2\}(x^j-y^j)}{\sigma^6}\kappa^V(x,y),\hfill i\neq j, \\
				a_{1,2}^{i,i}(x,y)\kappa^V(x,y):=\frac{-\{(y^i-x^i)^2-3\sigma^2\}(x^i-y^i)}{\sigma^6}\kappa^V(x,y),\hfill i= j,
			\end{cases}
		\end{align}
		and
		\begin{align}
			\hspace{-3em}
			D^{2e_j}(D^{2e_i}\kappa^V)_y(x)=\begin{cases}
				a_{2,2}^{j,i}(x,y)\kappa^V(x,y):=\frac{\{(x^i-y^i)^2-\sigma^2\}\{(x^j-y^j)^2-\sigma^2\}}{\sigma^8}\kappa^V(x,y),\hfill i\neq j, \\
				a_{2,2}^{i,i}(x,y)\kappa^V(x,y):=\frac{\{(x^i-y^i)^2-6\sigma^2\}(x^i-y^i)^2+3\sigma^4}{\sigma^8}\kappa^V(x,y),\hfill i= j.
			\end{cases}
		\end{align}	 
		\section{Derivations of (13)}
		The mean $m(x_{*})$ and the variance $\mu^2(x_{*})$ of $\hat{R}^{\phi}(x_{*})$ at a point $x_{*}\in\X$ are given as
		\begin{align}
			&m(x_{*})=k_{*}^{\T}(G+\mu_o^2I)^{-1}d_N,\\
			&\mu^2(x_{*})=C_K(x_{*},x_{*}):=\kappa(x_{*},x_{*})-k_{*}^{\T}(G+\mu_o^2I)^{-1}k_{*}.
		\end{align}	
		From Appendix \ref{proof}, we know that $U$ is linear and there exists $U^{-1}:\H_{R}\rightarrow\H_V$.
		Then, by following the arguments in {\cite[Equation~(8)]{jidling2017linearly}}, we obtain
		\begin{align}
			&m^V(x_{*})=U^{-1}(m^V)(x_{*})={K^V_{*}}^{\T}(G+\mu_o^2I)^{-1}d_N,\label{mean2}\\
			&{\mu^V}^2(x_{*})=U^{-1}C_K(x_{*},x_{*}){U^{\T}}^{-1}:=U^{-1}{U_{r}}^{-1}C_K(x_{*},x_{*}),
		\end{align}	
		where ${U_{r}}^{-1}$ acts on the second argument of $C_K(x_{*},x_{*})$, e.g.,
		$K(x,y)=U_{r}(\kappa^V(x,\cdot))(y)$.  Therefore, we obtain
		\begin{align}
			{\mu^V}^2(x_{*})&=U^{-1}{U_{r}}^{-1}C_K(x_{*},x_{*})={U_{r}}^{-1}(K(x_{*},\cdot))(x_{*}) -U^{-1}k_{*}^{\T}(G+\mu_o^2I)^{-1}k_{*}{U^{\T}}^{-1}\nn\\
			& =\kappa^V(x_{*},x_{*}) -{K_{*}^V}^{\T}(G+\mu_o^2I)^{-1}K_{*}^V. \label{cov2}
		\end{align}

		\section{Monotone approximation, strong convergence, and sparsity}
		In our proposed framework, the immediate cost function is estimated.  Therefore, if the monotone approximation property and strong convergence are guaranteed
		for the immediate cost function in the RKHS $\H_R$ under certain conditions, these properties are also guaranteed for the VF because of one-to-one correspondence between $\H_R$ and $\H_V$.
		In the RKHS $\H_R$, an estimate of $R^{\phi}$ at time instant $n\in\integer_{\geq0}$ is given by
		$\hat{R}^{\phi}_n(x)=\sum_{i}^{r}c_i\kappa(x,x_i),\;c_i\in\Real,\;r\in\integer_{\geq0}$,
		where $\{x_i\}_{i\in\{1,2,...,r\}}\subset\X$ is the set of samples, and the reproducing kernel $\kappa$ is defined in (9).
		The estimate of the VF $V^{\phi}$ at time instant $n\in\integer_{\geq0}$ is then obtained by
		$\hat{V}^{\phi}_n(x)=U^{-1}(\hat{R}^{\phi}_n)(x)=\sum_{i=1}^{r}c_iK(x,x_i)$, where $K$ is defined in (10).
		Therefore, when an algorithm promoting sparsity is employed and only fewer kernel functions are employed to estimate the immediate cost function, i.e., $r$ is suppressed small, it immediately implies that sparsity is preserved for the estimate of the VF as well.
		
		\section{Experimental settings}
		We present the experimental settings.
		The parameter settings for the Mountain Car problem and the simulated inverted pendulum are summarized in \reftab{sumparam1}, and \reftab{sumparam2}, respectively.
		The parameters were roughly tuned so that the algorithms work reasonably well.
		However, we conducted no elaborative tuning or heuristic approaches, which are crucial to ensure stable improvements of performance in RL, to further improve performances, because the main purpose of the experiments is to show sensitiveness of DT approaches toward the choice of the time interval.
		\begin{table}[t]
			\caption{Summary of the parameter settings for the Mountain Car problem}
			\label{sumparam1}
			\centering
			\begin{tabular}{lllll}
				\toprule
				Parameters & CTGP & CTKF & GPTD  &DTKF  \\
				\midrule
				Coherence threshold & 0.70 & 0.70 & 0.70 & 0.70\\
				Kernel parameter $\sigma$ & 0.2 & 0.2 & 0.2 & 0.2\\
				Maximum absolute value of control & 1.0  & 1.0 & 1.0 & 1.0 \\
				Control cycle & 1.0 ${\rm (sec)}$ & 1.0 ${\rm (sec)}$ & 1.0 ${\rm (sec)}$ & 1.0 ${\rm (sec)}$ \\
				Discount factor $\beta,\gamma$ & $\beta=0$ & $\beta=0$ & $\gamma=1$ & $\gamma=1$ \\
				Standard deviation $\mu_o$ of the observed cost & 0.1 & $0.1$ & $0.1\Delta t^2$ & $0.1\Delta t^2$ \\
				Cost on controls: Matrix $M$ & $0.001I$ & $0.001I$ & $0.001I$& $0.001I$ \\
				Stochastic term in the SDE: Matrix $A(x,u)$ & $0$ & $0$ & $0$ & $0$ \\
				Step size & -- & 1.8 & -- & 0.4 \\
				\bottomrule
			\end{tabular}
		\end{table}
		\begin{table}[t]
			\caption{Summary of the parameter settings for the simulated inverted pendulum}
			\label{sumparam2}
			\centering
			\begin{tabular}{lll}
				\toprule
				Parameters & CTGP & GPTD  \\
				\midrule
				Coherence threshold & 0.95 & 0.95\\
				Kernel parameter $\sigma$ & 0.2 & 0.2\\
				Maximum absolute value of control & 6  & 6 \\
				Learning time per update & 10 ${\rm (sec)}$ & 10 ${\rm (sec)}$ \\
				Time interval $\Delta t$ & 0.01 ${\rm (sec)}$ & 0.01 ${\rm (sec)}$ \\
				Discount factor $\beta,\gamma$ & $\beta=0.01$ & $\gamma=e^{-0.01*0.01}$ \\
				Standard deviation $\mu_o$ of the observed cost & 0.1 & 0.1 \\
				Cost on controls: Matrix $M$ & $0.1I$ & $0.1I$ \\
				Stochastic term in the SDE: Matrix $A(x,u)$ & $0.01I$ & $0.01I$ \\
				Gravity $g$ & 9.8 & 9.8 \\
				Mass $m$ & 1 & 1 \\
				Length $\ell$ of pendulum & 1 & 1 \\
				Friction effect $\rho$ & 0.01 & 0.01\\
				\bottomrule
			\end{tabular}
		\end{table}
	\end{appendices}	

%
\end{document}